\def\one{\mathbf{1}}
\def\mesnu{\overline{\nu}}
\newcommand{\weakto} {\rightharpoonup}                 
\newcommand{\weakstarto}{\stackrel {*} {\weakto}}      
\newcommand{\mean}[1]{\,-\hskip-1.08em\int_{#1}} 
\newcommand{\pushright}[1]{\ifmeasuring@#1\else\omit\hfill$\displaystyle#1$\fi\ignorespaces} 
\newcommand{\res}{\mathop{\hbox{\vrule height 7pt width .5pt depth 0pt
\vrule height .5pt width 6pt depth 0pt}}\nolimits} 
\DeclareMathOperator{\spt}{spt} 
\DeclareMathOperator{\Div}{div}
\DeclareMathOperator{\sign}{sign}
\newcommand{\M}[1]{\mathcal{#1}}    
\renewcommand{\H}{\M{H}}
\renewcommand{\L}{\M{L}}
\newcommand{\N}{\mathbb{N}}
\newcommand{\R}{\mathbb{R}}
\def\diver{\mathop{\rm div}\nolimits}
\newcommand{\Leb}[1]{\mathcal{L}^{#1}} 
\newcommand{\eps}{\varepsilon}
\newcommand{\vphi}{\varphi}
\newcommand{\W}{W}
\def\uinf{u^{\infty}}
\def\vinf{v^{\infty}}
\def\medint{-\kern  -,375cm\int}
\newcommand{\A}{\boldsymbol A}
\newcommand{\B}{\boldsymbol B}
\newcommand{\F}{\boldsymbol F}
\newcommand{\Abar}{\overline{\boldsymbol A}}
\renewcommand{\a}{\boldsymbol a}
\newtheorem{theorem}{Theorem}[section]
\newtheorem{lemma}[theorem]{Lemma}
\theoremstyle{definition}
\newtheorem{definition}[theorem]{Definition}
\theoremstyle{remark}
\newtheorem{remark}[theorem]{Remark}
\newtheorem*{ack}{Acknowledgements}
\begin{document}

\author[G.~Crasta]{Graziano Crasta}
\address{Dipartimento di Matematica ``G.\ Castelnuovo'', Univ.\ di Roma I\\
P.le A.\ Moro 2 -- I-00185 Roma (Italy)}
\email{crasta@mat.uniroma1.it}
\author[V.~De Cicco]{Virginia De Cicco}
\address{Dipartimento di Scienze di Base  e Applicate per l'Ingegneria, Univ.\ di Roma I\\
Via A.\ Scarpa 10 -- I-00185 Roma (Italy)}
\email{virginia.decicco@sbai.uniroma1.it}
\author[G.~De Philippis]{Guido De Philippis}
\address{Unit\'e de Math\'ematiques Pure et Appliqu\'ees -- ENS de Lyon\\
	UMR 5669 -- UMPA}
\email{guido.de-philippis@ens-lyon.fr}
\author[F.~Ghiraldin]{Francesco Ghiraldin}
\address{Institut f\"ur Mathematik, Universit\"at Z\"urich \\
Winterthurerstrasse 190 -- CH-8057 Z\"urich}
\email{francesco.ghiraldin@math.uzh.ch}

\keywords{conservation laws with discontinuous flux, uniqueness}
\subjclass[2010]{35L65}
%

\date{September 28, 2015; revised January 19, 2016.}


\title[Multidimensional conservation laws with discontinuous flux]{Structure of solutions of multidimensional
	conservation laws with discontinuous flux
	and applications to uniqueness}

\begin{abstract}
We investigate the structure of solutions of conservation laws with discontinuous flux under quite general assumption on the flux. We show that any entropy solution admits traces on the discontinuity set of the coefficients and we use this to prove the validity of a generalized Kato inequality for any pair of solutions. Applications to uniqueness of solutions are then given.
\end{abstract}

\maketitle

\section{Introduction}
Aim of this paper is to study the structure of solutions of conservation laws with discontinuous flux of the form 
\begin{equation}
\label{f:equa}
\Div_z \A(z,u)=0,
\end{equation}
in order to establish a general framework for studying uniqueness of solutions of the Cauchy problem  associated to the evolutionary equation\footnote{Note that \eqref{f:evo0} is a particular case of \eqref{f:equa} with  \(\A(z,u)=(u,\F(z,u))\) and \(z=(t,x)\).} 
\begin{equation}\label{f:evo0}
u_t + \Div_x \F(t,x,u) = 0,\qquad \text{in}\ (0,+\infty)\times \R^N.
\end{equation}
Here \(\A\) (respectively \(\F\))  is discontinuous in its first variable \(z\)  (respectively \((t,x)\)). More precisely we will assume that \(\A(z,\cdot)\in C^1(\R, \R^n)\), 
\(\A(\cdot, v)\in SBV(\R^n, \R^n) \) where \(SBV\) is the space of special function of bounded variation, see \cite[Chapter 4]{AFP}, and that $\A$ satisfies some mild structural assumptions
listed in Section~\ref{sec:prel}.

\medskip

In recent years, the study of conservation laws with discontinuous flux  
has attracted the attention of many authors since they naturally arise in many models, 
see \cite{AMV,AKR,AnMitr,AP,Cocl,Die,GNPT,disc8,Mitr-DCDS,Pan1} and the references therein.

Even in the case the flux \(\F\) is smooth it is well known that the Cauchy problem associated to~\eqref{f:evo0} it is not well posed and some additional {\em entropy conditions} have to be imposed in order to recover uniqueness of the solution, see \cite{Kruz}. 
In the case of a discontinuous flux, these conditions are still not sufficient to select a unique solution to \eqref{f:evo0} and further dissipation conditions, 
involving the traces of the solutions
on the set 
of discontinuities of the flux, must be imposed
in order to ensure uniqueness.

The problem of existence and uniqueness for solutions of \eqref{f:evo0} has been mainly   studied in the case of one space variable and of fluxes  with just one point discontinuity (but the analysis can be easily extended to the case of finitely many discontinuity points).  Assuming that the discontinuity is located at \(x=0\) and
imposing the validity of  Kruzhkov entropy inequalities
separately on $(-\infty, 0)$ and $(0,+\infty)$,
one can show that  every pair $u$, $v$ of bounded solutions satisfies
\begin{equation}\label{f:katou}
\int 
|u(T,x)-v(T,x)| \, dx \leq 
\int 
|u(0,x)-v(0,x)| \, dx
+ \int_0^T  \W(u^\pm(t,0), v^\pm(t,0)) \, dt
\,,
\end{equation}
where $\W$ is a quantity that depends only on the traces
$u^\pm$, $v^\pm$ of $u$ and $v$ at $x=0$. 
The \(L^1\)-contractivity of the semigroup associated with~\eqref{f:evo0} is then obtained if $W(u^\pm, v^\pm)\leq 0$ for every pair of solutions.
%
Several conditions  have been proposed in literature in order to have that \(W\le0\), and different conditions lead to different physically relevant semigroups of solutions, see \cite{AKR,GNPT}.

In \cite{AKR} Andreianov, Karlsen and Risebro  have proposed a general framework in order to study uniqueness for  \eqref{f:evo0} in the model  case of one space variable and for fluxes with finitely many discontinuity points. 
The validity of the inequality \(W\le 0\) is axiomatized in the notion of \(L^1\)-dissipative germ and given a germ \(\mathcal G\)  they show uniqueness of \(\mathcal G\)-entropy solutions, see Definition~3.8 
in~\cite{AKR} and Definition~\ref{def:gentropy} below.

Loosely speaking, at a point of discontinuity of  the flux \(\F\), a germ \(\mathcal{G}\) is a set of pairs \((u^-,u^+)\)
satisfying the Rankine--Hugoniot condition, 
such that
\[
W(u^\pm, v^\pm) \leq 0 \qquad
\forall (u^-, u^+),\ (v^-,v^+) \in \mathcal{G}
\]
and, in the model case of flux with one single discontinuity at \(x=0\), a \(\mathcal G\)-entropy solution is a solution of \eqref{f:evo0} satisfying  Kruzkov's conditions outside the origin and whose traces at \(0\) belong to \(\mathcal G\).  A similar analysis has been performed, always in the model case of one dimensional fluxes with one discontinuity point, independently by Garavello, Natalini, Piccoli and Terracina in \cite{GNPT} in terms of the notion of dissipative Riemannian solvers. 
Let us  also mention that this analysis can be extended to the multidimensional case by assuming that the set of discontinuity of the flux is a regular submanifold, see \cite{AnMitr}, or by assuming a priori \(BV\) regularity of the solution, see \cite{CDD}. 

\smallskip
The main purpose of this paper is to provide a general framework  to extend this analysis to solutions of \eqref{f:evo0} under quite general assumptions on the  flux. In order to do this we introduce a rather weak notion  of entropy solution, see Definition~\ref{d:entrsol} below, and under a suitable genuine nonlinearity assumption on the flux we show that these solutions admits traces on the discontinuity set of the coefficients, see Theorem \ref{thm:tr0} below. 
Once the existence of traces  has been established we prove that any pair of weak entropy solutions of  \eqref{f:equa} satisfies a generalized Kato inequality with a reminder term concentrated on the discontinuity set of the flux, see Theorem~\ref{thm:Kato0} below. 
It is then classical to show that this Kato type inequality leads to a quasi contractivity inequality for solutions of \eqref{f:evo0} of the form \eqref{f:katou}. 
Once this inequality has been established, the analysis in \cite{AKR} in terms of germs and of 
\(\mathcal G\)-entropy solutions can be straightforwardly extended to~\eqref{f:evo0}, 
see Theorem~\ref{thm:gentropy} below. 
As a byproduct of our results we can also obtain existence and uniqueness of solutions of~\eqref{f:evo0} assuming Sobolev dependence of the flux \(\F\) with respect to \((t,x)\), see Theorem~\ref{thmW110} below.

\medskip
Let us now describe in a more detailed way our main results.
First of all, 
the structural assumptions on $\A$
and the results in \cite{ACDD} guarantee
the existence of a  \(\H^{n-1}\)--rectifiable set \(\mathcal N\)
(defined in~\eqref{def:N} below)
that represents a universal jump set of \(\A(\cdot, v)\), independent of \(v\).

We say that a distributional solution
\(u\in L^{\infty}(\R^n)\) of~\eqref{f:equa}
a \textsl{weak entropy solution} (WES) of~\eqref{f:equa},
if there exists
a non-negative Radon measure \(\mu\) such that
\(\mu(\R^n\setminus \mathcal N)=0\) and, for every \(k\in\R\),
\begin{equation}\label{eq:ent0}
\Div_z \Big(\sign (u-k) [\A(z,u)-\A(z,k)]\Big)+\sign(u-k)\Div^a_z\A(z,k)\leq \mu,
\end{equation}
see Definition~\ref{d:entrsol} below.
Here \(\Div^a_z\A(\cdot,k)\) denotes, for every \(k\in\R\),
the absolutely continuous part of the measure
\(\Div_z \A(\cdot,k)\).

As we shall see in a moment,
the notion of weak entropy solution is strong enough
to guarantee that such solutions possess a reasonable structure.
On the other hand, it is weak enough to include
essentially all solutions of~\eqref{f:equa}
obtained by approximation schemes.
In particular, under our assumptions on the flux, the solutions constructed by Panov
in \cite{Pan1} are weak entropy solutions.

Assuming the genuine nonlinearity of the flux,
and adapting to our setting
the techniques developed by
De Lellis, Otto and Westdickenberg in \cite{DLOW}
(see also \cite{Pan2007,Vas}),
our first result ensures
the existence of traces on $\mathcal{N}$
for weak entropy solutions.  Loosely speaking, we have the following result, 
see Theorem~\ref{thm:tracce} below for the precise statement.

\begin{theorem}[Existence of traces]\label{thm:tr0}
If \(u\) is 
a bounded weak entropy solution of~\eqref{f:equa},
then \(u\) admits traces \(u^{\pm}\) 
on \(\mathcal{N}\)
(in a generalized sense, 
see Definition~\ref{def:tracce}).
\end{theorem}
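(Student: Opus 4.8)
The strategy is to localize near a point $x_0 \in \mathcal{N}$ where $\mathcal{N}$ is approximately a hyperplane, and to extract the one-sided traces $u^{\pm}$ on each side of this hyperplane as limits (in a suitable averaged/measure-theoretic sense) of $u$ restricted to half-balls. The key engine is the kinetic/$H^{-1}$-compactness machinery of De Lellis, Otto and Westdickenberg \cite{DLOW}: the genuine nonlinearity of the flux $\A(z,\cdot)$, combined with the entropy inequality \eqref{eq:ent0}, forces strong compactness of $u$ along blow-up sequences, which is exactly what is needed to produce traces.

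First I would fix a point $x_0 \in \mathcal{N}$ at which (i) $\mathcal{N}$ admits an approximate tangent hyperplane with normal $\nu = \nu(x_0)$, (ii) the rectifiability structure from \cite{ACDD} gives the decomposition of $\Div_z \A(\cdot,k)$ into jump part (on $\mathcal{N}$) and diffuse part, and (iii) the singular measure $\mu$ has zero density — all of which hold for $\mathcal{H}^{n-1}$-a.e. $x_0 \in \mathcal{N}$. Then I would perform the rescaling $u_r(y) := u(x_0 + ry)$ and examine the behaviour as $r \to 0^+$ on the two half-balls $B_1^{\pm} = B_1 \cap \{ \pm (y \cdot \nu) > 0 \}$. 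On each half-ball the rescaled flux $\A(x_0 + r\,\cdot\,, v)$ converges (using the $SBV$, hence approximate continuity away from $\mathcal{N}$, structure and the $C^1$ dependence in $v$) to a flux $\A^{\pm}(v)$ that is \emph{constant in the spatial variable}, and the rescaled entropy inequality, after discarding the contributions of $\mu$ and of the absolutely continuous part $\Div^a_z \A$ (both of which scale subcritically), becomes in the limit a genuine scalar conservation law with genuinely nonlinear flux in the half-space. Here is where the core of \cite{DLOW} enters: for such limit objects one has strong $L^1_{loc}$ convergence of a subsequence of $u_r$ on each half-ball, and the limit is independent of the choice of subsequence by a standard diagonal/monotonicity argument; its boundary trace on $\{ y \cdot \nu = 0 \}$ exists and is constant, and unwinding the rescaling this defines $u^{\pm}(x_0)$. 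One then checks that the resulting functions $u^{\pm}$ are the traces in the generalized sense of Definition~\ref{def:tracce}, i.e. that $u(x_0 + ry) \to u^{\pm}(x_0)$ in the appropriate averaged sense for $\mathcal{H}^{n-1}$-a.e.\ $x_0$.

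The main obstacle, and the place where most of the work lies, is proving the strong compactness of the blow-up family $\{u_r\}$ up to the boundary hyperplane under only the weak entropy condition \eqref{eq:ent0} — in particular controlling the interplay between the spatial discontinuity of $\A$ along $\mathcal{N}$ and the genuine nonlinearity. In the smooth-flux setting \cite{DLOW} this is a velocity-averaging / $H^{-1}$-compensated-compactness statement for the kinetic formulation; here one must run the same argument \emph{on each side} of $\mathcal{N}$ and verify that the defect measure $\mu$, being concentrated on $\mathcal{N}$ with $\mu(\R^n \setminus \mathcal{N}) = 0$, together with the absolutely continuous source term $\sign(u-k)\Div^a_z\A(z,k)$, do not destroy the compactness after rescaling — this is a matter of checking the scaling exponents and using that $\mu$ has vanishing $(n-1)$-density at $\mathcal{H}^{n-1}$-a.e. point of $\mathcal{N}$, plus that $\Div^a_z \A$ is an $L^1$ (or measure absolutely continuous w.r.t.\ $\Leb{n}$) term whose rescaled mass on $B_r$ tends to zero. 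A secondary technical point is the passage from the $C^1$-in-$v$, $SBV$-in-$z$ regularity of $\A$ to the requisite uniform-in-$r$ bounds needed to pass to the limit in the rescaled entropy inequalities; this uses the structural assumptions of Section~\ref{sec:prel} and the fine properties of $SBV$ functions from \cite[Chapter 4]{AFP}. Once compactness up to the boundary is in hand, identifying the trace and its independence of the blow-up sequence is routine. I would structure the actual proof by first stating the rescaled entropy inequality as a lemma, then invoking (a boundary version of) the De Lellis--Otto--Westdickenberg compactness, and finally assembling the traces; the precise statement and the meaning of ``generalized sense'' are deferred to Theorem~\ref{thm:tracce} and Definition~\ref{def:tracce}.
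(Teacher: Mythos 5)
Your overall plan coincides with the paper's strategy in Section~\ref{s:tracce} (blow up at $\H^{n-1}$-a.e.\ point of $\mathcal N$, pass to a kinetic formulation, use (GNL) to get strong compactness of the rescaled family, and read the traces off the limit), but there are genuine gaps at precisely the two decisive steps. The first is the compactness itself. You defer it to ``(a boundary version of) the De Lellis--Otto--Westdickenberg compactness'' run on each side of $\mathcal N$, with the remaining work being to check that $\mu$ and $\Div^a_z\A$ scale away. But the results of \cite{DLOW} concern a fixed, $z$-independent flux, whereas the blow-ups $u_r$ solve conservation laws with fluxes $\A_r(z,v)=\A(z_0+rz,v)$ that are discontinuous in $z$, vary with $r$, and whose jump across the interface does \emph{not} scale away: it survives in the limit as the piecewise-constant flux $\Abar_{z_0}$ in \eqref{conv1}. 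So the obstruction is not only the scaling of $\mu$ and of the absolutely continuous divergence; one needs a pre-compactness statement robust under this $L^1$-converging discontinuous flux. The paper supplies it in Lemma~\ref{l:panov}: an Ascoli-type argument (using (H2)) shows $\A_r(\cdot,u_r)-\Abar_{z_0}(\cdot,u_r)\to 0$ in $L^2(B_1)$, hence $\diver_z\Abar_{z_0}(\cdot,u_r)$ is pre-compact in $W^{-1,2}$, and then Panov's theorem \cite[Thm.~6]{Pan1} (which requires (GNL) for $\Abar_{z_0}$) gives $L^1$ pre-compactness. Your sketch contains no substitute for this step. Moreover, before any blow-up one must know that the entropy defect distributions $\eta_k$ are Radon measures locally bounded uniformly in $k$, which uses exactly the fact that a single $\mu$ dominates all $\eta_k$ in Definition~\ref{d:entrsol}; this is what allows the construction of the kinetic measure $\eta$ and the limits \eqref{conv2}, and it is absent from your outline.

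The second gap is the identification of the limit. Definition~\ref{def:tracce} requires the blow-ups to converge in $L^1_{\rm loc}$ to $c^-\one_{H^-}+c^+\one_{H^+}$, i.e.\ the limit must be \emph{constant on each half-space}; it is not enough that the half-space solution ``has a constant boundary trace'' on $\{z\cdot\nu(z_0)=0\}$, and in your sketch even that constancy is asserted rather than proved. In the paper this is Step~4: since $\diver^a_z\A_r\to 0$, the limiting kinetic equation \eqref{f:bkinetic} has right-hand side concentrated on the hyperplane and no $k$-drift, so in each open half-space $\chi(k,u^\infty)$ solves the free transport equation \eqref{f:transport} for a.e.\ $k$, and (GNL), via the convolution argument with $n$ linearly independent directions $\a^\pm(k_i)$ (as in \cite[Prop.~7(b)]{DLOW}), forces $u^\infty$ to be constant on $H^+$ and on $H^-$. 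Finally, your claim that the limit is independent of the subsequence ``by a standard diagonal/monotonicity argument'' is not correct as stated: when $c^-=c^+$ the trace set can be a whole interval of diagonal pairs (Theorem~\ref{thm:tracce}), and when $c^-\neq c^+$ uniqueness is a genuinely nontrivial point, proved in Step~5 through the continuity of $r\mapsto u_r$ in $L^1(B_1)$; for the bare existence statement of Theorem~\ref{thm:tr0} this uniqueness is fortunately not needed, but it cannot be dismissed as routine.
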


%

\medskip
The existence of generalized traces 
of weak entropy solutions
allows us to prove the validity
of the following
\textsl{Generalized Kato Inequality},
see Theorem~\ref{thm:representation} below for the precise statement.

\begin{theorem}[Generalized Kato Inequality]\label{thm:Kato0}
	Let \(u\) and \(v\) be weak entropy solutions.
	Then  
	\begin{equation}\label{Kato0}
	\Div_z \Big(\sign (u-v) [\A(z,u)-\A(z,v)]\Big)\leq W(u^\pm, v^\pm)\, \mathcal H^{n-1}\res \mathcal N\,,
	\end{equation}
where
\begin{equation}\label{W0}
\begin{split}
W(u^\pm,v^\pm)
&=\Big\{\sign (u^+-v^+) [\A^+(z,u^+)-\A^+(z,v^+)]\\
&\quad\quad-(\sign (u^--v^-) [\A^-(z,u^-)-\A^-(z,v^-)]\Big\}\cdot \nu_{\mathcal{N}},
\end{split}
\end{equation}
where $\nu_{\mathcal{N}}$ is the measure--theoretic normal to the
$\mathcal{H}^{n-1}$ rectifiable set $\mathcal{N}$,
and $\A^\pm(z,v)$ are the traces at $z$ of the
SBV function $z\mapsto \A(z, v)$.
\end{theorem}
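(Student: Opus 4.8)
The plan is to run the classical doubling-of-variables argument of Kruzhkov, adapted to the present setting where the "good" part of the flux divergence lives on the absolutely continuous side and the "bad" part is forced to concentrate on $\mathcal{N}$. First I would fix two weak entropy solutions $u$ and $v$ and apply the entropy inequality~\eqref{eq:ent0} for $u$ with the constant $k$ replaced, via a standard Kruzhkov doubling in the $z$-variable, by the values $v(z')$; symmetrically one uses the entropy inequality for $v$ with constant $u(z)$. Adding the two resulting inequalities and passing to the diagonal (letting the mollification parameter in the doubling go to zero), the cross terms combine into $\Div_z\big(\sign(u-v)[\A(z,u)-\A(z,v)]\big)$, while the $\Div^a_z\A(z,k)$ correction terms cancel exactly against their counterparts because $\Div^a_z\A(\cdot,u)$ and $\Div^a_z\A(\cdot,v)$ are absolutely continuous and the chain rule for the Lipschitz-in-$v$ dependence applies on the diagonal. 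The upshot is the distributional inequality
\[
\Div_z\Big(\sign(u-v)[\A(z,u)-\A(z,v)]\Big)\leq \overline{\mu},
\]
where $\overline{\mu}$ is a non-negative measure supported on $\mathcal{N}$ (it is dominated by the sum of the two defect measures $\mu_u,\mu_v$ from~\eqref{eq:ent0}, hence $\overline{\mu}(\R^n\setminus\mathcal{N})=0$).

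Next I would identify the left-hand side as an $\Haus{n-1}\res\mathcal{N}$-absolutely continuous measure and compute its density. The vector field $\a := \sign(u-v)[\A(z,u)-\A(z,v)]$ is a bounded, divergence-measure field, so its distributional divergence restricted to the rectifiable set $\mathcal{N}$ is represented by the jump of its normal component: $(\Div_z\a)\res\mathcal{N} = \big([\a^+ - \a^-]\cdot\nu_{\mathcal{N}}\big)\Haus{n-1}\res\mathcal{N}$, by the Anzellotti–Chen–Frid normal-trace theory (or the Gauss–Green formula for $DM^\infty$ fields). Here is where Theorem~\ref{thm:tr0} is essential: the one-sided traces $u^\pm,v^\pm$ of $u,v$ on $\mathcal{N}$ exist in the generalized sense, and because $v\mapsto\A(z,v)$ is continuous while $z\mapsto\A(z,v)$ is $SBV$ with jump part carried by $\mathcal{N}$, the normal traces $\a^\pm$ are computed by composing: $\a^\pm\cdot\nu_{\mathcal{N}} = \sign(u^\pm - v^\pm)[\A^\pm(z,u^\pm) - \A^\pm(z,v^\pm)]\cdot\nu_{\mathcal{N}}$, where $\A^\pm(z,\cdot)$ denotes the $SBV$ traces of $z\mapsto\A(z,\cdot)$ on the two sides of $\mathcal{N}$. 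Subtracting gives precisely the quantity $W(u^\pm,v^\pm)$ of~\eqref{W0}. Combining with the inequality above and the fact that $\overline{\mu}$ is concentrated on $\mathcal{N}$ yields~\eqref{Kato0}.

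The main obstacle is the interchange of the doubling limit with the traces, i.e.\ justifying that the cross-terms in the doubled entropy inequality really do converge on the diagonal to $\Div_z\big(\sign(u-v)[\A(z,u)-\A(z,v)]\big)$ and that the residual defect stays concentrated on $\mathcal{N}$. Since $u$ and $v$ are merely $L^\infty$ weak entropy solutions, $\sign(u-v)$ has no a priori regularity, so one cannot simply substitute $k=v(z)$; one must mollify, use the Lipschitz continuity of $\A(z,\cdot)$ to control the error terms uniformly, and carefully handle the part of the mollified entropy flux that interacts with $\mathcal{N}$. The key point making this work is that away from $\mathcal{N}$ the defect measures $\mu_u,\mu_v$ vanish, so there the doubling is exactly the Kruzhkov argument for a Lipschitz flux and produces no residual; the only place where residual mass can survive is $\mathcal{N}$ itself, where the generalized traces from Theorem~\ref{thm:tr0} allow one to pass to the limit and pick up exactly $W(u^\pm,v^\pm)$. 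A secondary technical point is checking that the cancellation of the $\Div^a_z\A(z,k)$ terms is legitimate on the diagonal; this follows from the chain rule for the composition of the $W^{1,1}$-in-$z$ absolutely continuous part with the $C^1$-in-$v$ dependence, together with the coarea/locality properties recorded in the structural assumptions of Section~\ref{sec:prel}.
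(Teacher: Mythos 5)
Your overall skeleton (Kruzhkov doubling to get a defect measure concentrated on $\mathcal N$, then a blow-up/normal-trace representation of $\Div_z\big(\sign(u-v)[\A(z,u)-\A(z,v)]\big)\res\mathcal N$ using the generalized traces of $u,v$ and the $SBV$ traces $\A^\pm$) matches the paper's strategy, and your second half is essentially the paper's Lemma~\ref{lemma:campo} combined with Theorem~\ref{thm:tracce}. But there is a genuine gap at the heart of the doubling step. You claim that the $\Div^a_z\A(z,k)$ terms ``cancel exactly'' and that the resulting defect measure is dominated by $\mu_u+\mu_v$, and you justify this by saying that away from $\mathcal N$ the argument is ``exactly the Kruzhkov argument for a Lipschitz flux.'' Neither assertion is justified here: $\A(\cdot,v)$ is only $SBV$ in $z$, so when $\nabla\delta_\eps(z-z')$ hits the difference $\A(z,\cdot)-\A(z',\cdot)$ one is forced to analyze the incremental quotients $\eps^{-1}[\A(z\pm\eps w,\cdot)-\A(z,\cdot)]$, uniformly in the $v$-variable. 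This is precisely where the paper needs Lemma~\ref{lemma:luigi} (an Ambrosio/DiPerna--Lions type decomposition of the difference quotient into an equi-integrable part converging to $\nabla_z\A\cdot w$ and a singular part controlled by $\sigma^s$), together with the translation-continuity Lemma~\ref{lemma paraculo}. The absolutely continuous parts do cancel the $\Div^a_z\A$ terms, but only \emph{in the limit} and only after an integration by parts in the mollifier variable $w$; the jump part of $D_z\A$ leaves a nonvanishing residual, and the correct conclusion of the doubling is \eqref{eq:incremento}, i.e.
\begin{equation*}
\Div_z\Big(\sign(u-v)[\A(z,u)-\A(z,v)]\Big)\;\le\; 2\mu + C\,|\sigma^s|,
\end{equation*}
not an inequality with right-hand side $\mu_u+\mu_v$. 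The theorem survives because $|\sigma^s|$ is itself concentrated on $\mathcal N$, a structural fact you never invoke since your argument produces no such term. As stated, your ``exact cancellation on the diagonal via the chain rule'' would only be valid if $\A(\cdot,v)$ were Sobolev in $z$ (the situation of Theorem~\ref{thmW110}); moreover even off $\mathcal N$ the flux is not Lipschitz in $z$, so the classical Kruzhkov bookkeeping must be replaced by the equi-integrability (Dunford--Pettis) and a.e.\ convergence arguments of Lemma~\ref{lemma:luigi}.

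Two smaller points. First, invoking Anzellotti--Chen--Frid normal traces is not quite enough: for a general bounded divergence-measure field the normal trace on a rectifiable set is only a weakly defined density and need not equal the pointwise composition $\sign(u^\pm-v^\pm)[\A^\pm(z,u^\pm)-\A^\pm(z,v^\pm)]\cdot\nu$; what makes the identification work is that the composed field has \emph{strong} blow-up traces, which requires the $L^1_{\rm loc}$ convergence $\A(z_0+rz,v)\to\A^\pm(z_0,v)$ uniformly enough in $v$ (again a Lemma~\ref{lemma paraculo}-type argument) combined with Theorem~\ref{thm:tracce}, and then the blow-up computation of Lemma~\ref{lemma:campo}. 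Second, for $W(u^\pm,v^\pm)$ to be well defined you must address the case of non-unique traces (when $u^+=u^-$ ranges over an interval): the paper shows that at points where the density $w$ is nonzero the traces are forced to be unique, using the Rankine--Hugoniot relation and (GNL); without this step the right-hand side of \eqref{W0} is ambiguous.
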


In order to prove the above theorem,
we combine Kruzkov's doubling of variables technique (see \cite{Kruz}) 
with Ambrosio's lemma on incremental quotients of $BV$ functions
(see \cite{Ambrosio2004}),
to show that the left--hand side of \eqref{Kato0} is a measure
whose positive part is concentrated on $\mathcal N$.
Once this result has been established, 
the representation formula~\eqref{W0} is an easy consequence
of the existence of traces.

\medskip
Our main application concerns the study of uniqueness conditions for the 
Cauchy problem associated to the
multidimensional evolutionary equation~\eqref{f:evo0}. 
In this case, a bounded distributional solution
\(u\in L^{\infty}((0,+\infty)\times\R^N)\) of~\eqref{f:evo0}
is a weak entropy solution to~\eqref{f:evo0} if,  for every \(k\in\R\),
\begin{equation}\label{eq:ent1}
\partial_t |u-k| + \Div_x \Big(\sign (u-k) [\F(t,x,u)-\F(t,x,k)]\Big)+\sign(u-k)\Div^a_x\F(t,x,k)\leq \mu,
\end{equation}
where \(\mu\) is, as before, a non-negative measure concentrated on \(\mathcal{N}\).

The Generalized Kato Inequality \eqref{Kato0}
implies the \textsl{quasi}--contractivity of the \(L^1\) norm of 
the difference of solutions
in the following sense: 
if 
\(u, v \in C^{0}([0,+\infty); L^1(\R^N))\cap L^\infty((0,+\infty)\times\R^N)\) 
are weak entropy solutions of~\eqref{f:evo0},
then for every \(T>0\) and every \(R>0\)
\begin{equation}\label{f:contr}
\begin{split}
& \int_{B_R}\ 
|u(T,x)-v(T,x)| \, dx
\\ & \leq 
\int_{B_{R+VT}}\ 
|u(0,x)-v(0,x)| \, dx
+ \int_{{\mathcal N}\cap ([0,T]\times B_{R+VT})} \W(u^\pm,v^\pm)\,d\H^{N} 
\,,
\end{split}
\end{equation}
where \(B_r := \{x\in\R^N:\ |x|< r\}\) and \(V := \|\A\|_{\infty}\).
As a consequence, if one prescribes an entropy condition stronger than~\eqref{eq:ent0} and
implying the inequality \(W\leq 0\), then
the Generalized Kato Inequality would give the 
standard contractivity inequality
\begin{equation}\label{f:contr2}
\int_{B_R}\ 
|u(T,x)-v(T,x)| \, dx
\leq 
\int_{B_{R+VT}}\ 
|u(0,x)-v(0,x)| \, dx
\end{equation}
and hence the uniqueness of
solutions to the Cauchy problems associated to~\eqref{f:evo0}, see Definition~\ref{def:germ} 
and Theorem~\ref{thm:gentropy} below. 

\medskip

Let us also stress that existence of solutions 
satisfying these additional entropy conditions is not trivial and currently 
not known in the general setting here considered. 
Existence results are available assuming additional conditions on the structure of the flux field, see Remark~\ref{rmk:onesto} for a more detailed discussion.
\medskip

In case \(\F(\cdot,u)\in W^{1,1}\) and satisfies the assumptions listed in Section~\ref{sec:prel}, it is straightforward to check that \(\mathcal N=\emptyset\), 
so that~\eqref{Kato0} implies \emph{contractivity} of the semigroup associated to~\eqref{f:evo0}. 
In particular, also using the results of Panov \cite{Pan1}, we can  generalize to this situation  the classical Kruzkov results concerning existence and uniqueness  of solutions of \eqref{f:evo0}, see Theorem~\ref{thmW110} and Remark~\ref{rmk:onesto} below.

\medskip

Let us conclude this Introduction by presenting the structure of the paper. 
In Section~\ref{sec:prel} below we state our main assumption on the flux \(\A\), we recall some of its consequence and we provide the precise statements of our main results. 
In Section~\ref{s:tracce} we prove Theorem~\ref{thm:tracce}, 
in Section~\ref{sec:representation} we prove Theorem ~\ref{thm:representation} and eventually 
in Section~\ref{sec:fine} we provide the proofs of Theorems~\ref{thm:gentropy} and~\ref{thmW110}.

\medskip

\begin{ack}
F.G.\ has been supported by ERC 306247 \textit{Regularity of area-minimizing currents} and by
SNF 146349 {\it Calculus of variations and fluid dynamics}. 
G.D.P.\ is supported by the MIUR SIR grant \textit{Geometric Variational Problems} (RBSI14RVEZ).
\end{ack}

\medskip
\noindent
\textit{Conflict of interest:}
The authors declare that they have no conflict of interest.

\section{Assumptions on the vector field and main results}\label{sec:prel}
In this section we   state our main structural hypotheses  on the vector field (assumptions (H1)--(H5) below)  and prove some consequences of these assumptions.

\subsection{Structural assumptions on the vector field}
  Let \(\A\in L^\infty (\R^n\times \R;\R^n)\) be such that:
\begin{enumerate}
\item[(H1)] There exists a set \(\mathcal C_{\A}\) with \(\Leb{n}(\mathcal C_{\A})=0\) such that 
 \(\A(z,\cdot)\in C^1(\R,\R^n)\) for every \(z\in \R^n\setminus  \mathcal C_{\A}\) and 
 \(\A(\cdot,v)\in SBV(\R^n,\R^n)\) for every \(v\in \R^n\).
\item[(H2)]There exists a constant \(M\) such that
 \[
 |\partial_v \A(z, v)|\le M\qquad  \forall \, z\in \R^n\setminus \mathcal C_{\A},\quad v\in \R.
 \]

\item[(H3)]There exists a modulus of continuity \(\omega\) such that 
\[
|\partial_v\A (z, u)-\partial _v \A (z,w)|\le \omega(|u-w|)\qquad \forall\, z\in \R^n\setminus\mathcal  C_{\A},\quad u\,,w \in \R.
\]
\item[(H4)]There exists a function \(g\in L^1(\R^n)\)  such that  
\[
\begin{split}
|\nabla_z \A(z,u)-\nabla_z \A(z,w)|&\le g(z)|u-w|
\qquad
 \forall\, z\in \R^n\setminus\mathcal  C_{\A},\quad u\,,w \in \R,
\end{split}
\] 
where \(\nabla_z \A(z,v)\)
denotes the approximate gradient of the map \(z\mapsto \A(z,v)\).
 \item[(H5)] The measure
\begin{equation}\label{sigma}
\sigma := \bigvee_{u\in \R} |D_z \A(\cdot, u)|
\end{equation}
satisfies \(\sigma(\R^n)<\infty\). 
Here \(D_z \A(\cdot, u)\) is the distributional gradient of the map  \(z\mapsto \A(z,u)\) (which is a measure since \(\A(\cdot ,u)\in BV\)) and \(\bigvee\) denotes the least upper bound in the space of non-negative Borel measures, see \cite[Definition~1.68]{AFP}.
\end{enumerate}


 Assumptions (H1)-(H5) imply that  \(\A\) satisfies the hypotheses of \cite{ACDD}. Let us summarize some consequence of this fact. First of all from the definition of $\sigma$ we deduce that 
\[
\bigvee_{v\in\R}|\nabla\A(\cdot,v)|\L^n\leq \sigma^a\L^n\qquad \bigvee_{v\in\R}|D^s\A(\cdot,v)|\leq \sigma^s,
\] 
where $\nabla\A(\cdot,v)$ and $D^s\A(\cdot,v)$ are the approximate differential of $\A(\cdot,v)$ and the singular part of the measure
$D\A(\cdot,v)$ respectively, and $\sigma^a\L^n$ and $\sigma^s$ are the the absolutely continuous and singular parts of $\sigma$.
Moreover if we define 
\begin{equation}\label{def:N}
\mathcal N := \Big\{z\in \R^n: \liminf_{r \to 0} \frac{\sigma(B_r(z))}{r^{n-1}}>0\Big\},
\end{equation}
then \(\mathcal N\) is a  \(\H^{n-1}\)  rectifiable set\footnote{Recall that a set $\mathcal N\subset\R^n$ is said $\H^{n-1}$-rectifiable (shortened: rectifiable) if there are countably many $C^1$ submanifolds $M_i$ of dimension $n-1$ such that $\mathcal H^{n-1}(\mathcal N \setminus \bigcup_i M_i)=0$.},
see Section~3 in \cite{ACDD}. 
Furthermore for \(\H^{n-1}\)-a.e. point in \(\R^n\setminus \mathcal N\) and {\em every} \(v\in \R\) there exists  the limit
\[
\tilde{\A}(z,v) := \lim_{r\to 0}\mean{B_r(z)} \A(y,v)dy\,,
\]
and  for \(\H^{n-1}\) almost every \(z\in \mathcal N\) and \emph{every} \(v\in \R\) there exists the traces of \(\A\) on \(\mathcal N\) defined as:  
\begin{equation}\label{eq:defApm}
\A^{\pm}(z,v) := \lim_{r\to 0} \mean{B^\pm _r(z)} \A(y,v) dy,
\end{equation}
where we denoted $B^\pm _r(z) = \{w\in B_r(z) : \pm\langle w-z,\nu(z)\rangle\geq 0\}$. 
In addition  the functions \(v\mapsto \tilde{\A}(z,v),\,\A^{\pm}(z,v)\) are \(C^1\) with derivatives given by \(\partial_v \widetilde \A(z,v)= \widetilde {\partial_v \A}(z,v)\) and 
\(\partial_v \A^{\pm}(z,v)=(\partial_v \A(z,v))^{\pm}\) respectively,
 see \cite[Proposition 3.2]{ACDD}. 
 Hence, if we denote by \(\a\) the vector field
\begin{equation}\label{defapiccolo}
\a(z,v) := \partial_v \A(z,v),
\end{equation}
then \(\a\) admits a precise representative for \(\H^{n-1}\)-almost every \(z\in \R^n\setminus \mathcal N\) as well as one sided traces on \(\mathcal N\) that agree with \(\partial_v\A\) (respectively with \(\partial_v \A^{\pm}\)).

In the sequel we shall assume the following genuine nonlinearity hypothesis: 
\begin{equation}\tag{GNL}
\mathcal L^{1}(\{v: \a^{\pm}(z,v) \cdot \xi=0\})=0 \qquad \textrm{for every \(\xi\in S^{n-1}\) and for \(\H^{n-1}\) a.e. \(z\in \mathcal N\)}.
\end{equation}

\begin{remark}\label{lincei}
	Let us  point out that our hypotheses include (and actually are modeled on) the case \(\A(z,v)=\widehat \A (w(z),v)\) where \(w\in SBV(\R^n; \R^d)\cap L^\infty(\R^n; \R^d) \),  
	\(\widehat \A\in C^1(\R^d\times \R,\R^n)\cap{\rm Lip} (\R^d\times \R,\R^n) \),
	and 
\[
\mathcal L^{1}(\{v: \partial_v{\widehat \A}(w,v) \cdot \xi=0\})=0 \qquad 
\textrm{for every \(\xi\in S^{n-1}\) and for every \(w\in \R^d\)}.
\]	
\end{remark}

\begin{remark}
Since we are dealing with bounded solutions,
all our assumptions can be localized in the $v$ variable.
Moreover, it is not difficult to modify the proofs in order to
localize also in the $z$ variable, 
see Remark 3.5 in \cite{ACDD}.
\end{remark}

\subsection{Main results}
We consider the following scalar conservation law
\begin{equation}\label{f:scalar}
\Div_z \A(z,u(z))=0\,,
\end{equation}
where
$\A\colon\R^n\times\R\to\R^n$ satisfies the structural assumption (H1)--(H5) and (GNL).

\begin{definition}[Weak entropy solutions]\label{d:entrsol}
A function  \(u\in L^{\infty}( \R^n)\)
is a {\em  weak entropy solution} (WES shortened) of \eqref{f:scalar}
if $u$ is a distributional solution of \eqref{f:scalar} and for every \(k\in \R\) it holds
\begin{equation}\label{eq:ent}
\Div_z \Big(\sign (u-k) [\A(z,u)-\A(z,k)]\Big)+\sign(u-k)\Div^a_z\A(z,k)\leq \mu,
\end{equation}
where \(\mu\) is a non-negative Radon measure {\em independent} of \(k\) and  such that \(\mu(\R^n\setminus \mathcal N)=0\).
\end{definition}
Here 
$\diver^a_z\A(z,k) = {\rm tr}\nabla \A(z,k)$ is the absolutely continuous part of $\diver_z\A(\cdot,k)$.


\begin{definition}[Traces]\label{def:tracce}
Let $u\in L^\infty(\R^n)$ and  
let $\mathcal J\subset\R^n$ be an $\H^{n-1}$-rectifiable set oriented by a normal vector
field $\nu$. We let the set of traces of $u$ at $z_0\in\mathcal J$ be
\[
\Gamma_{u, \mathcal J}(z_0) := \Big\{ (c^-,c^+): \exists r_k\downarrow 0 : u_{z_0,r_k}\rightarrow c^-\one_{H^-} + c^+\one_{H^+} \text{ in }L^1_{\rm loc}   \Big\},
\]
where $ u_{z_0,r_k} (z) :=  u(z_0 + r_k(z-z_0))$, 
$H^\pm :=\{ z\in\R^n : \pm \langle z-z_0,\nu\rangle \geq 0 \}$
and $\one_A$ denotes the characteristic function of a set $A$. 
\end{definition}


The very same definition can be given component-wise for a vector field \(\boldsymbol B\in L^\infty(\R^n, \R^n)\). 
Moreover it is immediate to see from the definition that if $u\in L^\infty(\R^n)$ and \(f\in C^0(\R)\) then
\[
\Gamma_{f(u), \mathcal J}(z_0) = \big\{(f(c^-),f(c^+)) : (c^-,c^+)\in  \Gamma_{u, \mathcal J}(z_0)  \big\} .
\]

\begin{theorem}[Existence of generalized traces]\label{thm:tracce}
If \(u\) is a WES, then for $\mathcal H^{n-1}$ almost every $z_0\in\mathcal N$ 
\[
\Gamma_{u,\mathcal N}(z_0)\neq\emptyset.
\]
Moreover if  $(c^-,c^+)\in \Gamma_{u,\mathcal N}(z_0)$ satisfies $c^-\neq c^+$ then the traces are unique:
$\Gamma_{u,\mathcal N}(z_0) = \{    (c^-,c^+)\}$.  Otherwise there exist $a,b\in\R$ such that 
\[
\Gamma_{u,\mathcal N}(z_0) = \{ (v,v): v\in [a,b] \}.
\]
Finally, the Rankine--Hugoniot condition holds: 
\[
\A^-(z_0,c^-)\cdot \nu(z_0)=\A^+(z_0,c^+)\cdot \nu(z_0)\qquad \forall\, (c^-,c^+)\in \Gamma_{u,\mathcal N}(z_0).
\]  

\end{theorem}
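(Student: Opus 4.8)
The plan is to adapt the blow-up/kinetic argument of De Lellis--Otto--Westdickenberg to our setting with a rectifiable discontinuity set. Fix a point $z_0\in\mathcal N$ at which $\mathcal N$ has an approximate tangent plane with normal $\nu=\nu(z_0)$, at which the traces $\A^\pm(z_0,\cdot)$ and $\a^\pm(z_0,\cdot)$ exist and are $C^1$ in $v$ (this holds for $\H^{n-1}$-a.e.\ $z_0\in\mathcal N$ by the results recalled from \cite{ACDD}), at which the genuine nonlinearity condition (GNL) holds for every $\xi\in S^{n-1}$, and at which the entropy defect measure $\mu$ satisfies $\limsup_{r\to0}\mu(B_r(z_0))/r^{n-1}<\infty$ (again $\H^{n-1}$-a.e.\ by differentiation of the Radon measure $\mu$ with respect to $\H^{n-1}\res\mathcal N$). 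First I would rescale: setting $u_{r}(z)=u(z_0+r(z-z_0))$, the functions $u_r$ are bounded in $L^\infty$, hence precompact in the weak-$*$ sense, and by the Kruzkov entropy inequalities \eqref{eq:ent} the rescaled functions satisfy, for every $k$,
\[
\Div_z\Big(\sign(u_r-k)[\A_r(z,u_r)-\A_r(z,k)]\Big)+\sign(u_r-k)\Div^a_z\A_r(z,k)\le \mu_r,
\]
where $\A_r$ and $\mu_r$ are the corresponding rescalings, and $\mu_r$ has uniformly bounded mass on $B_1(z_0)$ by the choice of $z_0$; moreover $\A_r(\cdot,v)\to \A^-(z_0,v)\one_{H^-}+\A^+(z_0,v)\one_{H^+}$ in $L^1_{\rm loc}$ for every $v$, with the absolutely continuous part $\nabla_z\A_r$ vanishing in $L^1_{\rm loc}$ by (H4)--(H5). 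Passing to a subsequential limit, any limit point $\bar u$ of $u_r$ is an entropy solution of the "frozen" two-phase conservation law $\Div_z[\A^-(z_0,\bar u)\one_{H^-}+\A^+(z_0,\bar u)\one_{H^+}]=0$ on $B_1(z_0)$, with vanishing entropy production away from the hyperplane $\{\langle z-z_0,\nu\rangle=0\}$.

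The heart of the argument is then to show that such a frozen limit is, in each half-space $H^\pm$, independent of the variable in the tangent plane directions and admits a one-sided trace on the hyperplane --- equivalently, that the original $u_r$ converge (along a subsequence $r_k\downarrow0$) to a function of the form $c^-\one_{H^-}+c^+\one_{H^+}$. Inside each half-space the coefficient $z\mapsto \A^\pm(z_0,v)$ no longer depends on $z$, so $\bar u$ restricted to $H^\pm$ is a Kruzkov entropy solution of a conservation law with \emph{constant} flux that is genuinely nonlinear in every direction by (GNL); for such solutions the De Lellis--Otto--Westdickenberg structure theorem (or Panov's theorem on the strong trace property, \cite{Pan2007,Vas}) gives that $\bar u$ has strong one-sided traces on $\partial H^\pm=\{\langle z-z_0,\nu\rangle=0\}$, and a second blow-up at a point of that hyperplane forces $\bar u|_{H^\pm}$ to be constant. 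This yields $(c^-,c^+)\in\Gamma_{u,\mathcal N}(z_0)$, proving nonemptiness.

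For the dichotomy and uniqueness statement: suppose $(c^-,c^+),(d^-,d^+)\in\Gamma_{u,\mathcal N}(z_0)$. Applying the Generalized Kato Inequality machinery (or more elementarily, comparing the two blow-up limits via Kruzkov's doubling of variables between the two frozen solutions, which live on the same half-spaces with the same flux) shows $W\big((c^-,c^+),(d^-,d^+)\big)\cdot\nu$ controls the sign of the jump of $|u-v|$ across the hyperplane; a careful examination, using that the flux is genuinely nonlinear, shows that two distinct traces with $c^-\ne c^+$ cannot coexist, so if some element of $\Gamma$ has $c^-\ne c^+$ it is the unique element. If instead every element is of the form $(v,v)$, then the continuity of the flux together with the Rankine--Hugoniot balance (below) forces the admissible values of $v$ to form an interval $[a,b]$ --- here one uses that the set of such $v$ is closed (as $\Gamma_{u,\mathcal N}(z_0)$ is closed under $L^1_{\rm loc}$ limits of blow-ups) and that intermediate values are attained by a connectedness/monotonicity argument on the blow-up sequence. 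Finally, the Rankine--Hugoniot condition $\A^-(z_0,c^-)\cdot\nu=\A^+(z_0,c^+)\cdot\nu$ is immediate once nonemptiness is known: testing the distributional equation $\Div_z\A(z,u)=0$ against a smooth function and blowing up at $z_0$ gives $\Div_z[\A^-(z_0,c^-)\one_{H^-}+\A^+(z_0,c^+)\one_{H^+}]=0$ in $\mathcal D'(B_1)$, and the distributional divergence of a piecewise-constant vector field across a hyperplane is exactly the jump of its normal component times $\H^{n-1}$ on that hyperplane.

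The main obstacle I anticipate is the blow-up convergence step: one must show that passing to the limit in the nonlinear entropy inequalities is legitimate despite the coefficient $\A_r(\cdot,v)$ only converging strongly in $L^1$ (not uniformly), and that the limiting entropy production is genuinely concentrated on the hyperplane with no residual part coming from the singular part $\sigma^s$ of the coefficient's gradient --- this is exactly where the hypothesis $\mu(\R^n\setminus\mathcal N)=0$, the bound $\limsup_r\mu(B_r(z_0))/r^{n-1}<\infty$, and the fine structure of $\mathcal N$ from \cite{ACDD} must be combined. The second delicate point is quantifying genuine nonlinearity after blow-up: (GNL) is stated for the \emph{one-sided traces} $\a^\pm$, which is precisely what survives the rescaling, so this should go through, but one must be careful that the direction $\xi$ ranges over all of $S^{n-1}$ uniformly in order to apply the compactness/structure results of \cite{DLOW}.
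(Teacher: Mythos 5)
Your overall blow-up strategy is the same as the paper's, and your treatment of Rankine--Hugoniot and of the interval structure in the diagonal case is essentially correct, but there are two genuine gaps in the core of the argument. The first is compactness: you only invoke weak-$*$ precompactness of the rescalings $u_r$, which is not enough to pass to the limit in the nonlinear quantities $\sign(u_r-k)[\A_r(z,u_r)-\A_r(z,k)]$, nor to conclude that a limit point is an entropy solution of the frozen two-phase law, nor -- crucially -- to conclude that some subsequence of $u_r$ converges in $L^1_{\rm loc}$ to a two-constant function, which is what membership in $\Gamma_{u,\mathcal N}(z_0)$ requires by Definition~\ref{def:tracce}. The paper obtains \emph{strong} $L^1$ precompactness of $(u_r)$ in Lemma~\ref{l:panov} by showing that $\A_r(\cdot,u_r)-\Abar_{z_0}(\cdot,u_r)\to 0$ in $L^2$, so that $\diver_z\Abar_{z_0}(\cdot,u_r)$ is precompact in $W^{-1,2}$, and then applying Panov's strong precompactness theorem \cite[Thm.~6]{Pan1}; without this (or an equivalent averaging-lemma argument at the kinetic level, which is where (GNL) enters quantitatively) the "passing to a subsequential limit" step does not produce a function $\bar u$ with the claimed properties. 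Related to this, your route to constancy of $\bar u$ on each half-space -- strong traces on $\partial H^\pm$ plus a "second blow-up" -- does not work as stated: a blow-up of $\bar u$ at a point of the hyperplane is not a blow-up of $u$ at $z_0$, and constancy of second blow-ups does not give constancy of $\bar u$ itself. The correct mechanism (and the paper's) is that the rescaled kinetic measures $\eta_r$ converge to a measure concentrated on the hyperplane, so in each open half-space $\chi(k,\uinf)$ solves the pure transport equation $\a^\pm(k)\cdot\nabla_z\chi(k,\uinf)=0$ with constant-in-$z$ coefficients, and (GNL) allows one to pick $n$ values $k_1,\dots,k_n$ with $\a^\pm(k_i)$ linearly independent and propagate a Lebesgue-point inequality to the whole half-space (the argument of \cite[Prop.~7(b)]{DLOW}); this forces $\uinf=u^+\one_{H^+}+u^-\one_{H^-}$.

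The second gap is in the dichotomy/uniqueness step. Invoking "the Generalized Kato Inequality machinery" is circular with respect to the paper's logical order (Theorem~\ref{thm:representation} is proved \emph{using} the existence and structure of traces), and even a direct doubling-of-variables comparison of two subsequential limits does not address the delicate point: two different subsequences could a priori produce the pair $(c^-,c^+)$ and the \emph{swapped} pair $(c^+,c^-)$, both of which satisfy the same limiting kinetic identity across the hyperplane, so genuine nonlinearity alone only pins down the interval $I(c^-,c^+)$, not the orientation of the jump. The paper excludes the swapped configuration by exploiting the continuity of $r\mapsto u_r$ in $L^1(B_1)$: if both $\uinf$ and its swap arose as limits, an intermediate-value argument would produce a third blow-up limit $w^\infty$ at $L^1$-distance exactly half of $\|\uinf-\vinf\|_{L^1(B_1)}$ from $\uinf$, while the constraint $I(w^-,w^+)=I(c^-,c^+)$ leaves only the two extreme configurations, a contradiction. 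Your proposal contains no substitute for this step, so the uniqueness claim when $c^-\neq c^+$ is not established.
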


\bigskip
\begin{theorem}[Generalized Kato Inequality]
\label{thm:representation}
Let \(u\) and \(v\) be WES.
Then  
there exists a Borel function $w\colon\mathcal{N}\to\R$ such that
the following Kato inequality holds true:
\begin{equation}\label{Kato2}
\Div_z \Big(\sign (u-v) [\A(z,u)-\A(z,v)]\Big)\leq w\, \mathcal H^{n-1}\res \mathcal N\,.
\end{equation}
Furthermore, for \(\mathcal H^{n-1}\) almost every \(z\in \mathcal{N}_1 := \{z\in\mathcal{N}:\ w(z)\neq 0\}\),
the functions $u$ and $v$ admit
unique traces at $z$ and the following representation formula holds:
\begin{equation}\label{W}
\begin{split}
w = W(u^\pm,v^\pm)
&=\Big\{\sign (u^+-v^+) [\A^+(z,u^+)-\A^+(z,v^+)]\\
&\quad\quad-(\sign (u^--v^-) [\A^-(z,u^-)-\A^-(z,v^-)]\Big\}\cdot \nu.
\end{split}
\end{equation}
\end{theorem}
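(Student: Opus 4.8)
The plan is to derive \eqref{Kato2}--\eqref{W} by combining Kruzhkov's doubling of variables with the trace theory of Theorem~\ref{thm:tracce}. First I would fix $u$ and $v$ WES and apply the doubling of variables argument on the product space $\R^n_z\times\R^n_y$: testing the entropy inequality \eqref{eq:ent} for $u=u(z)$ with $k=v(y)$ and the one for $v=v(y)$ with $k=u(z)$, adding them, and letting the two spatial variables collapse along a diagonal mollification. The delicate point here, compared with the classical Kruzhkov setting, is that $\A$ is only $SBV$ in $z$, so the term $\sign(u-k)\Div_z^a\A(z,k)$ and the incremental quotients $\A(z,u(z))-\A(z,v(y))$ must be handled carefully. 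This is where I would invoke Ambrosio's lemma on incremental quotients of $BV$ functions (\cite{Ambrosio2004}) together with assumptions (H3), (H4): the contributions coming from the diagonal part of $D_z\A$ cancel in the limit up to the absolutely continuous terms, while the singular part of $D_z\A$, which lives on $\mathcal N$ by \eqref{def:N} and (H5), produces a contribution supported on $\mathcal N$. The outcome of this step is that the distribution
\[
T:=\Div_z\Big(\sign(u-v)[\A(z,u)-\A(z,v)]\Big)
\]
is a (signed) Radon measure whose positive part $T^+$ is concentrated on $\mathcal N$, while on $\R^n\setminus\mathcal N$ one has $T\le 0$ (indeed $T^+\res(\R^n\setminus\mathcal N)=0$). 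Writing $T^+ = w_+\,\Haus{n-1}\res\mathcal N + T^+_s$ with $T^+_s\perp\Haus{n-1}\res\mathcal N$ and observing, via the upper density bound $\sigma(B_r(z))\lesssim r^{n-1}$ on $\mathcal N$ and the a priori bound $|T|\le 2\|\A\|_\infty\,\sigma$, that $T^+$ is absolutely continuous with respect to $\Haus{n-1}\res\mathcal N$, one gets $T\le w\,\Haus{n-1}\res\mathcal N$ with $w:=w_+$ Borel; this is \eqref{Kato2}.

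Next I would identify $w$ on the set $\mathcal N_1=\{w\neq 0\}$. Since $w>0$ there $\Haus{n-1}$-a.e.\ (after possibly changing sign of $w$ on a negligible set, which is harmless), and since $\mathcal N$ is $\Haus{n-1}$-rectifiable, I would blow up at a fixed $\Haus{n-1}$-a.e.\ point $z_0\in\mathcal N_1$: rescale $u$, $v$ and $\A(\cdot,k)$ by $u_{z_0,r}(z)=u(z_0+r(z-z_0))$ etc. Along a subsequence $r_k\downarrow 0$ furnished by Theorem~\ref{thm:tracce}, $u_{z_0,r_k}\to u^-\one_{H^-}+u^+\one_{H^+}$ and $v_{z_0,r_k}\to v^-\one_{H^-}+v^+\one_{H^+}$ in $L^1_{\rm loc}$ with unique two-sided traces (uniqueness of the traces at such points is exactly the dichotomy in Theorem~\ref{thm:tracce}, and the case $u^-=u^+$, $v^-=v^+$ must be excluded because it would force $w(z_0)=0$, contradicting $z_0\in\mathcal N_1$), while the rescaled $SBV$ vector fields $\A(z_0+r_k\cdot,k)$ converge, by (H5) and the definition of $\mathcal N$, to the piecewise-constant field $\A^-(z_0,k)\one_{H^-}+\A^+(z_0,k)\one_{H^+}$. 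Passing to the limit in the measure $T$ restricted to small balls and dividing by $r_k^{n-1}$, the left-hand side of \eqref{Kato2} converges to the jump part of $\Div$ of a piecewise-constant field across the hyperplane $\partial H^\pm$, which is precisely
\[
\Big(\sign(u^+-v^+)[\A^+(z_0,u^+)-\A^+(z_0,v^+)]-\sign(u^--v^-)[\A^-(z_0,u^-)-\A^-(z_0,v^-)]\Big)\cdot\nu(z_0),
\]
times $\Haus{n-1}\res\partial H^\pm$. Comparing with the right-hand side $w\,\Haus{n-1}\res\mathcal N$ and using the standard density/differentiation theorem for measures with respect to $\Haus{n-1}\res\mathcal N$ yields $w(z_0)=W(u^\pm,v^\pm)$, i.e.\ \eqref{W}. (One also checks the Rankine--Hugoniot relations $\A^\pm(z_0,c^\pm)\cdot\nu=\text{const}$ from Theorem~\ref{thm:tracce} are used to make the limiting divergence well defined as a measure on $\partial H^\pm$ rather than a genuine distribution.)

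The main obstacle I anticipate is the doubling-of-variables step in the non-Lipschitz-in-$z$ setting: one must show that when the two space variables are identified, the ``bad'' contribution $[\A(z,u(z))-\A(z,v(y))]$ acted on by $D_z\A$ does not generate an uncontrolled term, and that the singular part $\sigma^s$ contributes only a nonnegative multiple of $\Haus{n-1}\res\mathcal N$ with the sign needed for the inequality. This requires a careful interplay between Ambrosio's incremental-quotient lemma, the chain rule for $BV$ compositions, and the localization of $\sigma^s$ on $\mathcal N$; the regularity hypotheses (H2)--(H4) and (GNL) (the latter guaranteeing, via Theorem~\ref{thm:tracce}, that traces exist so the blow-up has the clean two-phase structure used in the last step) are precisely what make this go through. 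The remaining steps — absolute continuity of $T^+$ with respect to $\Haus{n-1}\res\mathcal N$, the blow-up identification of $w$, and the bookkeeping of signs in $W$ — are comparatively routine once the measure-theoretic structure of $T$ is in hand.
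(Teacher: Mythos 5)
Your overall route for \eqref{Kato2} coincides with the paper's (doubling of variables plus the Ambrosio-type incremental-quotient lemma, with the singular contribution controlled by $\sigma^s$, which is concentrated on $\mathcal N$), and your blow-up identification of the density is essentially a reproof of Lemma~\ref{lemma:campo}. But two points need attention, one of which is a genuine gap. First, the a priori bound $|T|\le 2\|\A\|_\infty\,\sigma$ that you invoke to get $T^+\ll\H^{n-1}\res\mathcal N$ is false: $T$ involves the non-$BV$ functions $u,v$, and already for a flux independent of $z$ one has $\sigma=0$ while $T$ is in general a nontrivial (negative) dissipation measure. The correct and standard route, which is the one the paper takes, is that the divergence of a bounded field, when it is a measure, vanishes on $\H^{n-1}$-null sets (\cite[Lemma 2.4]{DL}), so $T\res\mathcal N\ll\H^{n-1}\res\mathcal N$ by Radon--Nikodym since $\H^{n-1}\res\mathcal N$ is $\sigma$-finite; this is an easy repair, but as written that step is wrong.

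The substantive gap is the claim that uniqueness of the traces on $\mathcal N_1$ ``is exactly the dichotomy in Theorem~\ref{thm:tracce}'' once you exclude the case $u^-=u^+$, $v^-=v^+$. The dichotomy in Theorem~\ref{thm:tracce} explicitly allows, at a point where a solution does not jump, a whole family of trace pairs $\{(v',v'):v'\in[a,b]\}$ with $b>a$. The delicate case is therefore the mixed one: $u$ has a genuine jump $u^-\neq u^+$ (hence unique traces) while $v$ has equal but \emph{non-unique} traces; there $w$ can perfectly well be nonzero, so your exclusion argument does not apply, and without uniqueness of $v^\pm$ the representation \eqref{W} is not even well defined (which $v'\in[a,b]$ do you insert?). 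The paper closes exactly this case: using the Rankine--Hugoniot relations for both solutions one gets $w(z)=[\sign(u^+-v')-\sign(u^--v')]\,[\A^+(z,u^+)-\A^+(z,v')]\cdot\nu(z)$ for every admissible $v'$, and since $w(z)\neq 0$ is a fixed number this forces $[\A^+(z,v')-\A^+(z,v'')]\cdot\nu(z)=0$ for all $v',v''\in[a,b]$, contradicting (GNL) unless $a=b$. Some argument of this type (or an equivalent one) is indispensable; without it your proof establishes \eqref{Kato2} and the formula \eqref{W} only along subsequences of blow-ups, not the uniqueness statement in the theorem.
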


The Generalized Kato Inequality yields a uniqueness result for the Cauchy problem
for the evolutionary equation
\begin{equation}\label{f:evo}
\begin{cases}
u_t + \Div_x \A(t,x,u) = 0,& \text{in}\ (0,+\infty)\times \R^N,\\
u(0,x) = u_0(x), & x\in\R^N\,.
\end{cases}
\end{equation}
More precisely, if we prescribe an entropy condition stronger than~\eqref{eq:ent} and
implying the inequality \(w\leq 0\), then
the Generalized Kato Inequality gives the uniqueness of
solutions to~\eqref{f:evo}.  
To this end let us recall the definition of \textsl{dissipative germ} introduced in \cite{AKR}, 
see~Definition 3.1 there.

\begin{definition}[Germ]\label{def:germ}
Given two functions \(f^\pm\in C^0(\R)\), a set \(\mathcal G\subset \R^2\) is said to be a dissipative germ associated to \(f^\pm\) if the following two conditions hold true:
\begin{itemize}
\item[(i)] Every \((u^-,u^+)\in \mathcal G\) satisfies the Rankine--Hugoniot condition \(f^+(u^+)=f^-(u^-)\).
\item[(ii)] For every two pairs \((u^-,u^+)\,,(v^-,v^+)\in \mathcal G\) we have
\[
W_{f^\pm}(u^\pm,v^\pm):=\Big\{\sign (u^+-v^+) [f^+(u^+)-f^+(v^+)]-(\sign (u^--v^-) [f^-(u^-)-f^-(v^-)]\Big\}\le 0.
\]
\end{itemize}
\end{definition}

Following \cite{AKR1} we now define \(\boldsymbol{\mathcal G}\)-entropy solutions associated to germs,  
 compare with Definition~3 there and Definiton~3.8 in \cite{AKR}. 

\begin{definition}[$\boldsymbol{\mathcal G}$-entropy solutions]\label{def:gentropy} 
	Let \(\F: \R\times \R^N \times \R \to \R^N\) be such that \(\A:=(u,\F)\) satisfies (H1)-(H5) and (GNL) above. Let \(\mathcal N\subset \R\times \R^N\) be the rectifiable set defined in \eqref{def:N}. Assume that  for every \(z=(t,x)\in \mathcal N\)  such that \(\A^{\pm}(z,u)\) exist it is given a dissipative germ \(\mathcal G_{z}\) associated to \(f^{\pm}(u):=\A^{\pm}(z,u)\cdot\nu(z)\) and let us set \(\boldsymbol{\mathcal G}=\{\mathcal G_z\}_{z\in \mathcal N}\). 
	We say that a bounded function \(u\in C^0([0,+\infty);L^1(\R^N))\) is a \(\boldsymbol{\mathcal G}\)-entropy solution of  \eqref{f:evo} if 
\begin{itemize}
\item[(i)] \(u\) is a weak entropy solution of \eqref{f:evo} according to Definition \ref{d:entrsol}.
\item[(ii)] For \(\mathcal H^{N}\)-almost every \(x\in \mathcal N\) any \((u^-,u^+)\in \Gamma_{u,\mathcal N}(z) \) belongs to the germ \(\mathcal G_z\).
\end{itemize}
\end{definition}

A straightforward consequence of  Theorem \ref{thm:representation} is then the following:

\begin{theorem}[Uniqueness of \(\boldsymbol{\mathcal G}\)-entropy solutions]\label{thm:gentropy}
Let \(\F: \R\times \R^N\times\R\to \R^N\) be such that \(\A:=(u,\F)\) satisfies (H1)-(H5) and (GNL) above. Then for any choice of \(\boldsymbol{\mathcal G}\) there exists \emph{at most} one \(\boldsymbol{\mathcal G}\)-entropy solution of \eqref{f:evo}.
\end{theorem}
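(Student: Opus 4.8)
The plan is to derive Theorem~\ref{thm:gentropy} as an essentially formal consequence of the Generalized Kato Inequality (Theorem~\ref{thm:representation}) together with the germ condition. Suppose $u$ and $v$ are two $\boldsymbol{\mathcal G}$-entropy solutions of \eqref{f:evo} with the same initial datum $u_0$. By definition both are weak entropy solutions in the sense of Definition~\ref{d:entrsol} for the flux $\A=(u,\F)$, so Theorem~\ref{thm:representation} applies and yields a Borel function $w\colon\mathcal N\to\R$ with
\[
\Div_{(t,x)}\Big(\sign(u-v)\,[\A((t,x),u)-\A((t,x),v)]\Big)\leq w\,\mathcal H^{N}\res\mathcal N.
\]
The first key step is to show $w\le 0$ $\mathcal H^{N}$-a.e. on $\mathcal N$. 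On $\mathcal N_1=\{w\neq 0\}$ the representation formula \eqref{W} holds, so for $\mathcal H^N$-a.e.\ $z\in\mathcal N_1$ both $u$ and $v$ have unique traces $(u^-(z),u^+(z))$ and $(v^-(z),v^+(z))$, which by Definition~\ref{def:gentropy}(ii) lie in the germ $\mathcal G_z$. Writing $f^\pm(s)=\A^\pm(z,s)\cdot\nu(z)$ and noting $\A^\pm$ points only in directions where the dot product with $\nu$ is what enters \eqref{W}, one checks $w(z)=W_{f^\pm}(u^\pm(z),v^\pm(z))\le 0$ by property (ii) of the dissipative germ. Hence $w\le 0$ everywhere on $\mathcal N$, and the Kato inequality becomes
\[
\partial_t|u-v| + \Div_x\Big(\sign(u-v)\,[\F(t,x,u)-\F(t,x,v)]\Big)\leq 0
\]
in the sense of distributions on $(0,+\infty)\times\R^N$.

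The second step is to integrate this differential inequality to obtain contractivity in the form \eqref{f:contr2}, and then to conclude uniqueness. Here I would use the standard finite-speed-of-propagation test function argument: with $V:=\|\A\|_\infty$, fix $R>0$, $0<\tau<T$, and test the inequality against a function of the form $\phi(t)\,\chi_{t}(x)$, where $\chi_t$ is a smooth spatial cutoff that equals $1$ on $B_{R+V(T-t)}$ and vanishes outside a slightly larger ball, and $\phi$ approximates $\one_{(\tau,T)}$; the bound $|\sign(u-v)[\F(t,x,u)-\F(t,x,v)]|\le V|u-v|$ (which follows from (H2) and $\A=(u,\F)$, giving $|\F(t,x,u)-\F(t,x,v)|\le V|u-v|$) ensures the flux term controlled by the cutoff gradient has the right sign and cancels the spatial spreading, so one is left with
\[
\int_{B_R}|u(T,x)-v(T,x)|\,dx \le \int_{B_{R+V(T-\tau)}}|u(\tau,x)-v(\tau,x)|\,dx.
\]
Letting $\tau\downarrow 0$ and using the continuity hypothesis $u,v\in C^0([0,+\infty);L^1(\R^N))$ together with $u(0,\cdot)=v(0,\cdot)=u_0$ gives $\int_{B_R}|u(T,x)-v(T,x)|\,dx=0$; since $R$ and $T$ are arbitrary, $u\equiv v$.

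A few remarks on execution. The passage from the distributional inequality to the integrated form requires justifying that $t\mapsto\int_{B_r}|u(t,x)-v(t,x)|\,dx$ is well behaved enough to evaluate at $t=0$ and $t=T$; this is exactly where the $C^0([0,+\infty);L^1)$ regularity built into Definition~\ref{def:gentropy} is used, and it lets us replace the time cutoffs by honest endpoint evaluations after a routine mollification and limit. One should also note that the traces appearing in the germ condition are the generalized traces $\Gamma_{u,\mathcal N}(z)$ of Theorem~\ref{thm:tracce}, and Theorem~\ref{thm:representation} guarantees precisely that on $\mathcal N_1$ these are single pairs $(u^-,u^+)$, so there is no ambiguity in applying Definition~\ref{def:germ}(ii). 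I expect the main obstacle to be purely bookkeeping: verifying that the vector quantity in \eqref{W} contracted with $\nu(z)$ coincides with $W_{f^\pm}$ for $f^\pm=\A^\pm(z,\cdot)\cdot\nu(z)$, and handling the technical measure-theoretic details of the finite-propagation cutoff argument (choice of Lipschitz cutoffs, the limit $\tau\to 0$, and the fact that all the relevant sets $\mathcal N\cap([0,T]\times B_{R+VT})$ have finite $\mathcal H^N$ measure so the remainder term genuinely vanishes once $w\le 0$). None of these are deep, so the theorem follows as claimed ``straightforwardly'' from Theorem~\ref{thm:representation}.
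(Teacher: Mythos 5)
Your proposal is correct and follows essentially the same route as the paper: apply the Generalized Kato Inequality (Theorem~\ref{thm:representation}), use the representation formula \eqref{W} together with the germ dissipativity and Definition~\ref{def:gentropy}(ii) to get \(w\le 0\) on \(\mathcal N\), and then run the standard Kruzhkov cone test-function argument (with the \(C^0([0,+\infty);L^1)\) regularity to reach \(t=0\)) to obtain the \(L^1\) contraction and hence uniqueness. The only difference is cosmetic: the paper integrates first, keeping the remainder term over \(\mathcal N\cap([0,T]\times B_{R+VT})\), and only then invokes \(w\le 0\), whereas you establish \(w\le 0\) pointwise before integrating.
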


\begin{remark}\label{rmk:onesto}
Under  mild requirements on the flux, 
the existence of weak entropy solutions can be obtained by the results of Panov, see \cite{Pan1}. 
On the other hand, the existence of \(\boldsymbol{\mathcal G}\)-entropy solutions, i.e.\ additionally satisfying 
condition~(ii) in Definition~\ref{def:gentropy}, is far from trivial and known only in some special cases. 
Positive results in this direction are available either in one space dimension for a flux with a finite number of discontinuity points, see  for instance \cite{AKR,GNPT} and the references therein, or in many space dimensions and for the particular case of the 
 \textsl{vanishing viscosity germ}, assuming that the jump set of the \( \F \) is a $C^2$ submanifold \cite{AKR1}, see also  \cite{AnMitr} where a more general situation is considered. 
%
\end{remark}

If \(\F(\cdot,u)\) is a Sobolev function one can easily obtain from the above analysis uniqueness of (weak) entropy solutions.
\begin{theorem}\label{thmW110}
Let \(\F: \R\times \R^N\times\R\to \R^N\) be such that \(\A:=(u,\F)\) satisfies (H1)-(H5) and (GNL) above and assume that \(\F(\cdot,u)\in W^{1,1}(\R\times \R^N, \R^N)\) for every \(u\in \R\).  
Then any 
two (weak) entropy solutions 
\(u, v \in C^{0}([0,+\infty); L^1(\R^N))\cap L^\infty((0,+\infty)\times\R^N)\) 
of~\eqref{f:evo0} satisfy
\begin{equation*}
 \int_{\R^N}\ |u(T,x)-v(T,x)| \, dx \leq \int_{\R^N}\ |u(0,x)-v(0,x)| \, dx\,.
\end{equation*}
\end{theorem}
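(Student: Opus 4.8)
\textbf{Proof proposal for Theorem~\ref{thmW110}.}

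The plan is to reduce Theorem~\ref{thmW110} to the Generalized Kato Inequality of Theorem~\ref{thm:representation} by observing that, under the Sobolev hypothesis on $\F$, the universal jump set $\mathcal N$ is empty. First I would verify this claim. With $\A=(u,\F)$ and $z=(t,x)\in\R\times\R^N$, the singular parts of the measures $D_z\A(\cdot,v)=(0,D_{(t,x)}\F(\cdot,v))$ vanish for every $v$ because $\F(\cdot,u)\in W^{1,1}$; hence, using (H5) together with the first displayed consequence of the structural assumptions (namely $\bigvee_{v}|D^s\A(\cdot,v)|\le\sigma^s$), one concludes that $\sigma^s=0$, i.e.\ $\sigma=\sigma^a\Leb{n}$ is absolutely continuous. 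Consequently, for every $z$,
\[
\liminf_{r\to0}\frac{\sigma(B_r(z))}{r^{n-1}}
=\liminf_{r\to0} r\cdot\frac{\sigma(B_r(z))}{r^{n}}=0
\]
for $\Leb{n}$-a.e.\ $z$ by Lebesgue differentiation, and in fact at \emph{every} $z$ one has $\sigma(B_r(z))/r^{n-1}\le (\sigma^a\ast\text{(a mollifier)})(z)\,r\to 0$ using that $\sigma^a\in L^1$; the careful statement is that $\mathcal N$, defined in~\eqref{def:N}, satisfies $\Haus{n-1}(\mathcal N)=0$, which is all that is needed below. (This is exactly the point already asserted in the Introduction: ``$\mathcal N=\emptyset$'' up to $\Haus{n-1}$-null sets.)

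Next I would apply Theorem~\ref{thm:representation} to the two weak entropy solutions $u$ and $v$ of~\eqref{f:evo0}, viewed as WES of $\Div_z\A(z,\cdot)=0$ in the sense of Definition~\ref{d:entrsol}. Since $\Haus{n-1}(\mathcal N)=0$, the right-hand side $w\,\Haus{n-1}\res\mathcal N$ of~\eqref{Kato2} is the zero measure, so
\[
\Div_z\Big(\sign(u-v)\,[\A(z,u)-\A(z,v)]\Big)\le 0
\]
as a distribution on $(0,+\infty)\times\R^N$. In the $(t,x)$ notation with $\A=(u,\F)$ this reads
\[
\partial_t|u-v|+\Div_x\Big(\sign(u-v)\,[\F(t,x,u)-\F(t,x,v)]\Big)\le 0 .
\]
I would then run the classical Kruzhkov localization argument: fix $T>0$ and $R>0$, test the inequality against a nonnegative Lipschitz cutoff that is $1$ on a space-time cone truncated between $t=0$ and $t=T$ with slope $V:=\|\A\|_\infty\ge\Lip_u\F$, and let the cutoff steepen to obtain the finite-speed-of-propagation estimate~\eqref{f:contr2} on balls, i.e.
\[
\int_{B_R}|u(T,x)-v(T,x)|\,dx\le\int_{B_{R+VT}}|u(0,x)-v(0,x)|\,dx .
\]
The initial and terminal time slices are meaningful because $u,v\in C^0([0,+\infty);L^1(\R^N))$, which lets me pass to the limit in the time-mollified test functions. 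Finally, letting $R\to+\infty$ and using $u,v\in C^0([0,+\infty);L^1(\R^N))$ (so the right-hand side tends to $\|u(0,\cdot)-v(0,\cdot)\|_{L^1}$, finite, and the left-hand side to $\|u(T,\cdot)-v(T,\cdot)\|_{L^1}$ by monotone convergence) yields the stated global $L^1$-contraction.

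The only genuinely delicate point is the first step: checking rigorously that the Sobolev regularity of $\F(\cdot,u)$ forces $\Haus{n-1}(\mathcal N)=0$ rather than merely $\Leb{n}(\mathcal N)=0$. The subtlety is that $\sigma$ is a supremum over the \emph{uncountable} family $\{D_z\A(\cdot,u)\}_{u\in\R}$, so one must argue that this least upper bound is still absolutely continuous; this is handled by the structural analysis of~\cite{ACDD} quoted in Section~\ref{sec:prel}, which gives $\bigvee_v|D^s\A(\cdot,v)|\le\sigma^s$, combined with the elementary fact that an $L^1_{\mathrm{loc}}$ density $h$ satisfies $\int_{B_r(z)}h\,d\Leb{n}=o(r^{n-1})$ as $r\to0$ for $\Haus{n-1}$-a.e.\ $z$ (indeed the set where this fails has $\sigma$-finite $\Haus{n-1}$ measure but is $\Leb{n}$-null, hence $\Haus{n-1}$-null, by a standard covering argument, e.g.\ \cite[Theorem 2.56]{AFP}). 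Everything after that is the textbook Kruzhkov contraction argument and requires no new ideas; existence of such entropy solutions under the Sobolev hypothesis follows from Panov's results~\cite{Pan1} as noted in Remark~\ref{rmk:onesto}.
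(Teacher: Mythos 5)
Your argument follows the paper's proof essentially verbatim: Sobolev regularity of \(\F(\cdot,u)\) forces \(\sigma^s=0\), hence \(\Haus{N}(\mathcal N)=0\) (with \(n=N+1\)), so the right-hand side of \eqref{Kato2} is the zero measure and Theorem~\ref{thm:representation} yields the true Kato inequality \(\partial_t|u-v|+\Div_x\big(\sign(u-v)[\F(t,x,u)-\F(t,x,v)]\big)\le 0\), from which the \(L^1\)-contraction follows by the standard Kruzhkov cone/cut-off argument; your insistence on proving \(\Haus{n-1}(\mathcal N)=0\) (rather than literal emptiness of \(\mathcal N\)) via the density estimate of \cite[Theorem~2.56]{AFP} is exactly the right level of care and matches what the paper uses.

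The one flaw is your justification of \(\sigma^s=0\): the inequality \(\bigvee_v|D^s\A(\cdot,v)|\le\sigma^s\) points the wrong way and, combined with \(D^s\A(\cdot,v)=0\), only gives \(0\le\sigma^s\). The correct (one-line) argument is what the paper means by ``the definition of supremum of measures'': by \cite[Definition~1.68]{AFP}, \(\sigma(A)\) is a supremum of sums \(\sum_j|D_z\A(\cdot,v_j)|(A_j)\) over countable Borel partitions of \(A\), so if every \(|D_z\A(\cdot,v)|\ll\Leb{n}\) (here \(\A(\cdot,v)=(v,\F(\cdot,v))\) with \(\F(\cdot,v)\in W^{1,1}\)), then \(\sigma(A)=0\) whenever \(\Leb{n}(A)=0\); thus \(\sigma\ll\Leb{n}\) and \(\sigma^s=0\), with finiteness guaranteed by (H5). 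Alternatively, your inequality can be upgraded to the equality \(\sigma^s=\bigvee_v|D^s\A(\cdot,v)|\) by restricting the defining partitions to a Lebesgue-null Borel set carrying \(\sigma^s\). With this replacement the proof is complete and coincides with the paper's.
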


\section{Proof of Theorem~\ref{thm:tracce}}\label{s:tracce}
In this section we prove Theorem~\ref{thm:tracce}. We start with the following well known Lemma.
\begin{lemma}\label{lemma:campo}
Let $\boldsymbol B \in L^\infty(\R^n,\R^n)$ and assume that $\mu=\Div \boldsymbol B$ is a Radon measure. 
Then $|\mu|\ll\mathcal H^{n-1}$. Furthermore if $\mathcal J$ is a rectifiable set and $\Gamma_{\boldsymbol B,\mathcal J}(z)\neq\emptyset$ for \(\mathcal H^{n-1}\) almost every $z\in\mathcal J$ then
it holds
\[
\Div (\boldsymbol B)\res \mathcal J =  (\B^+ - \B^-) \cdot \nu\, \mathcal H^{n-1}\res \mathcal J
\]  
where \((\B^-(z),\B^+(z))\in \Gamma_{\boldsymbol B,\mathcal J}(z)\). 
In particular for every two pairs in \(\Gamma_{\boldsymbol B,\mathcal J}(z)\), their projections along $\nu(z)$ have the same difference.
\end{lemma}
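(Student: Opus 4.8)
\textbf{Proof plan for Lemma~\ref{lemma:campo}.}
The first assertion, $|\mu|\ll\mathcal H^{n-1}$, is a classical fact: if $\boldsymbol B\in L^\infty$ and $\Div\boldsymbol B=\mu$ is a Radon measure, then mollifying $\boldsymbol B_\eps=\boldsymbol B*\rho_\eps$ one has $\Div\boldsymbol B_\eps=\mu*\rho_\eps$, and for any Borel set $E$ with $\mathcal H^{n-1}(E)=0$ one covers $E$ by balls $B_{r_i}(x_i)$ with $\sum r_i^{n-1}<\delta$; testing the divergence against cutoffs supported in these balls and using $\|\boldsymbol B\|_\infty$ to bound the boundary terms gives $|\mu|(E)\lesssim \|\boldsymbol B\|_\infty\sum r_i^{n-1}\to0$. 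So I would either recall this or give this two-line mollification argument.

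For the representation formula on $\mathcal J$, the plan is to work at a fixed point $z\in\mathcal J$ which is simultaneously: a point where $\mathcal J$ has an approximate tangent plane $z+\nu^\perp$ (true for $\mathcal H^{n-1}$-a.e.\ $z\in\mathcal J$ by rectifiability), a point where $\Gamma_{\boldsymbol B,\mathcal J}(z)\neq\emptyset$, and a point where the measure $\mu\res\mathcal J$ has a well-defined density with respect to $\mathcal H^{n-1}\res\mathcal J$ (true $\mathcal H^{n-1}\res\mathcal J$-a.e.\ by Besicovitch differentiation, once we know $|\mu|\ll\mathcal H^{n-1}$ and hence $\mu\res\mathcal J$ is a measure absolutely continuous w.r.t.\ $\mathcal H^{n-1}\res\mathcal J$). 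Then I blow up: set $\boldsymbol B_{z,r}(y):=\boldsymbol B(z+r(y-z))$. Since $(\boldsymbol B^-,\boldsymbol B^+)\in\Gamma_{\boldsymbol B,\mathcal J}(z)$, along a suitable sequence $r_k\downarrow0$ we have $\boldsymbol B_{z,r_k}\to \boldsymbol B^-\one_{H^-}+\boldsymbol B^+\one_{H^+}$ in $L^1_{\rm loc}$. The rescaled divergences $\Div\boldsymbol B_{z,r_k}=r_k\,(\tau_{z,r_k})_\#\mu$ (appropriately normalized) converge, by the density/tangent-plane hypotheses, to the divergence of the limit field, which is the classical jump formula $\Div(\boldsymbol B^-\one_{H^-}+\boldsymbol B^+\one_{H^+})=(\boldsymbol B^+-\boldsymbol B^-)\cdot\nu\,\mathcal H^{n-1}\res(z+\nu^\perp)$. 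Matching the two limits identifies the density of $\mu\res\mathcal J$ at $z$ with $(\boldsymbol B^+(z)-\boldsymbol B^-(z))\cdot\nu(z)$, which is exactly the claimed formula; integrating over $\mathcal J$ gives $\Div(\boldsymbol B)\res\mathcal J=(\boldsymbol B^+-\boldsymbol B^-)\cdot\nu\,\mathcal H^{n-1}\res\mathcal J$. The final sentence is then immediate: the left-hand side $\Div(\boldsymbol B)\res\mathcal J$ does not depend on the choice of blow-up sequence or limiting pair, so for any two pairs $(\boldsymbol B^-,\boldsymbol B^+),(\tilde{\boldsymbol B}^-,\tilde{\boldsymbol B}^+)\in\Gamma_{\boldsymbol B,\mathcal J}(z)$ one must have $(\boldsymbol B^+-\boldsymbol B^-)\cdot\nu=(\tilde{\boldsymbol B}^+-\tilde{\boldsymbol B}^-)\cdot\nu$ for $\mathcal H^{n-1}$-a.e.\ $z$, i.e.\ their projections along $\nu(z)$ have the same difference.

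The main technical obstacle is making the blow-up convergence of the divergences rigorous: one must check that along the sequence $r_k$ the rescaled measures $\Div\boldsymbol B_{z,r_k}$ converge weakly-$*$ (locally) to $\Div$ of the $L^1_{\rm loc}$-limit, which requires the $L^1_{\rm loc}$ convergence of $\boldsymbol B_{z,r_k}$ (given) together with control ensuring no mass of $\mu$ escapes or concentrates off the tangent plane in the blow-up — this is where $|\mu|\ll\mathcal H^{n-1}$, the existence of the approximate tangent plane to $\mathcal J$, and the existence of the density of $\mu\res\mathcal J$ at $z$ all enter. The cleanest route is to test against a fixed $C^1_c$ vector field $\vphi$: $\int \boldsymbol B_{z,r_k}\cdot\nabla\vphi\to\int(\boldsymbol B^-\one_{H^-}+\boldsymbol B^+\one_{H^+})\cdot\nabla\vphi=\int_{z+\nu^\perp}(\boldsymbol B^+-\boldsymbol B^-)\cdot\nu\,\vphi\,d\mathcal H^{n-1}$ by the $L^1_{\rm loc}$ convergence alone, while the same integral equals (up to the rescaling normalization) $\int\vphi\,d\mu_{z,r_k}$, whose limit is governed by the density of $\mu$ at $z$; equating the two gives the result without ever needing a separate tightness argument. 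Since the statement is labeled "well known," I would keep the write-up brief, citing the tangent-plane and differentiation facts and spelling out only the blow-up identification.
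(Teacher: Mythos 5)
Your plan is correct and follows essentially the same route as the paper: the absolute continuity $|\mu|\ll\mathcal H^{n-1}$ is quoted (the paper cites \cite{DL}), and the jump formula is obtained exactly by your blow-up argument, i.e.\ testing $\mu$ against rescaled test functions $r^{1-n}\varphi((\cdot-z_0)/r)$ at $\mathcal H^{n-1}$-a.e.\ $z_0\in\mathcal J$ where the traces exist, the rescalings of $h\,\mathcal H^{n-1}\res\mathcal J$ converge (rectifiability plus Lebesgue-point/Besicovitch facts), and the remaining part of $\mu$ has vanishing $(n-1)$-density. The only difference is presentational: the paper makes your "no mass concentrates off the tangent plane" step precise by explicitly splitting $\mu=\mu\res\mathcal J+\mu\res(\R^n\setminus\mathcal J)$ and invoking \cite[Thm.~2.83, Eq.~2.41]{AFP}, and of course the test object should be a scalar $\varphi\in C^1_c$, not a vector field.
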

\begin{proof}
The fact that \(|\mu|\ll \mathcal H^{n-1}\) is proved for instance in \cite[Lemma 2.4]{DL}. To show the second part we decompose   \(\mu\) as 
\[
\mu=\mu\res {\mathcal J}  +\mu\res (\R^n \setminus \mathcal J) =: \mu_1+\mu_2
\]
with  \(\mu_1\perp \mu_2\). Since \(\mu_1\) is a Radon measure and  \(\mathcal H^{n-1}\res J\) is \(\sigma\)-finite we can apply the Radon-Nikodym Theorem to get that
\[
\mu_1= \Div (\boldsymbol B)\res \mathcal J=h \mathcal H^{n-1}\res \mathcal J
\]
for some \(h\in L^1( \mathcal H^{n-1}\res \mathcal J)\). Let now \(z_0\) be  a point such that \(\Gamma_{\boldsymbol B,\mathcal J}(z_0)\ne \emptyset\), 
\[
h(z_0+rz) \mathcal H^{n-1}\res \frac{\mathcal J-z_0}{r}\weakstarto h(z_0) \mathcal H^{n-1}\res \{z\cdot \nu(z_0)=0\}
\]
and 
\[
\lim_{r\to 0}\frac{ |\mu_2|(B_r(z_0))}{r^{n-1}}=0.
\]
Note that \(\mathcal H^{n-1}\) almost every point  \(z\) satisfies the above properties. Indeed, the first one follows by our assumptions, while the second and the third   ones follow, respectively,  from \cite[Theorem 2.83]{AFP}  and \cite[Equation 2.41]{AFP}. 

Let us 
 choose \(r_k\downarrow 0\) with
\[
\B_{r_k}\to \B^-(z_0)\one_{H^-}+\B^+(z_0)\one_{H^+},
\]
where $H^\pm = \{\pm\langle z,\nu(z_0)\rangle\geq 0\}$.
Let $\varphi\in C^1_c(\R^{n})$ and define \(\varphi_{r_k}(z)=r_k^{1-n}\varphi((z - z_0)/r_k)\). Integrating by parts  we get 
\begin{equation}\label{sticazzi2}
\langle \mu, \varphi_{r_k}\rangle=\frac{1}{r_k^n}\int \B(z)\cdot \nabla \varphi \Big(\frac{z - z_0}{r_k}\Big) dz=\int \B_{r_k}(z)\cdot \nabla \varphi(z)\,dz.
\end{equation}
Moreover
\[
\begin{split}
\langle \mu, \varphi_{r_k}\rangle&=
\frac{1}{r_k^{n-1}}\left\langle\mu_1, \varphi \Big(\frac{\cdot - z_0}{r_k}\Big)\right\rangle+\frac{1}{r_k^{n-1}}\left\langle\mu_2, \varphi \Big(\frac{\cdot- z_0}{r_k}\Big)\right\rangle\\
&=\int_{\frac{\mathcal J - z_0}{r_k}}h(r_k z + z_0)\varphi(z)d\mathcal H^{n-1}(z)+O\Big(\frac{ |\mu_2|(B_{r_k}(z_0))}{{r_k}^{n-1}}\Big)\,.
\end{split}
\]
Hence, passing to the limit as \(k\) goes to infinity in \eqref{sticazzi2},
we get 
\[
h(z_0) \int_{\{z \cdot \nu(z_0) = 0\}} \varphi(z) \, d\H^{n-1}(z)
= \B^-(z_0) \cdot \int_{H^-} \nabla\varphi(z)\, dz
+ \B^+(z_0) \cdot \int_{H^+} \nabla\varphi(z)\, dz\,.
\]
Integrating by parts
we obtain that $h(z_0) = (\B^+(z_0) - \B^-(z_0)) \cdot \nu(z_0)$,
and this concludes the proof.
\end{proof}

\begin{proof}[Proof of Theorem~\ref{thm:tracce}]
We divide the proof in several steps.

\medskip\noindent
\textsl{Step 1 (Definition of the measure for the kinetic equation)}.
Let $u$ be a WES, according to \eqref{eq:ent} for every $k\in\R$ the distribution
\begin{equation}\label{eq:ent2}
\eta_k:=\Div_z \Big(\sign (u-k) [\A(z,u)-\A(z,k)]\Big)+\sign(u-k)\Div^a_z\A(z,k)
\end{equation}
is a Radon measure. We now claim that for every $K\Subset\R$ and for every $R>0$
\begin{equation}\label{bbq}
\sup_{k\in K}|\eta_k |(B_R) \leq C(K,R).
\end{equation}
To see this note that  $\mu- \eta_k\geq 0$ for every $k\in K$.
Therefore if $\phi\in C^1_c(B_R)$ and $\chi\in C^1_c(B_{R+1})$ satisfies 
$\chi \geq 0$, $\chi \equiv 1$ in $B_R$, we have
\[
 \langle \mu- \eta_k, (\|\phi\|_\infty \pm \phi)\chi \rangle \geq 0,
\]
hence, since $\chi\phi = \phi$,
\[
\pm \langle \eta_k,\phi\rangle \leq -\langle \eta_k, \chi\rangle \|\phi\|_\infty + 2 \langle\mu,\chi\rangle \|\phi\|_\infty.
\]
The above inequality implies the validity of \eqref{bbq},
since, by the very definition of $\eta_k$,
one has
$\sup_{k\in K} |\langle \eta_k, \chi\rangle | \leq C(K,R)$.
In particular the map 
\[
C^{\infty}_c(\R^n\times\R) \ni \Phi \mapsto \langle\eta,\, \Phi\rangle := \iint_{\R^n\times \R} \Phi(z,k)\, d\eta_k(z)\, dk
\]
defines a Radon measure $\eta$ in $\R^n\times\R$. 
Moreover if we define 
\footnote{Recall that given a Borel measure \(\eta\) on a space 
\(X\) and a Borel map \(\pi:X\to Y\) the measure 
\(\pi_\#\eta\) on Y is defined as \(\pi_\#\eta(U)=\eta(\pi^{-1}(U))\) for every Borel set \(U\subset Y\).} 
$\mesnu:=\pi_\#(|\eta|)$, where $\pi:\R^n\times\R\rightarrow\R^n$ is the projection on the first factor,
then \(\mesnu\ll \mathcal H^{n-1}\). 
Indeed by Lemma \ref{lemma:campo} $|\eta_k|\ll\mathcal{H}^{n-1}$ so that if $\mathcal{H}^{n-1}(A)=0$ then 
\[
 \mesnu(A) \leq \int_{\R} |\eta_k|(A) dk =0. 
\]

\bigskip\noindent
\textsl{Step 2 (Kinetic formulation)}.
The function $(k,z)\mapsto \chi(k, u(z)) := \sign(u(z) - k)$
is a solution of the kinetic equation, see \cite{LPT} 
\begin{equation}\label{f:kinetic}
\diver_z \left[\chi(k, u) \partial_v \A(z,k)\right]
- \partial_k\left[\chi(k,u)\diver_z^a \A(z,k)\right]
= -\partial_k \eta
\qquad\text{in}\ \mathcal{D}'(\R^{n+1}),
\end{equation}
where \(\eta(k, A) := \eta_k(A)\).  
Indeed, let us consider in equation~\eqref{eq:ent2} a test function of the form
$\Phi(k,z) := \varphi(z) \partial_k \psi(k)$.
Recalling the definition of the measure $\eta$ and of $\chi(k,u)$, integrating in $k$ we get
\[
\begin{split}
& -\iint \nabla\varphi(z) \partial_k\psi(k) \chi(k,u) [\A(z,u) - \A(z,k)]\, dz\,dk
\\ &
+\iint \varphi(z) \partial_k\psi(k) \chi(k,u) \diver_z^a\A(z,k)\, dz\, dk
 = \int \varphi(z) \partial_k\psi(k)\, d\eta(k,z),
\end{split}
\]
so that
\[
\begin{split}
& \iint \nabla\varphi(z) \psi(k) \partial_k\big(\chi(k,u) [\A(z,u) - \A(z,k)]\big)\, dz\,dk
\\ &
-\iint \varphi(z) \psi(k) \partial_k\big(\chi(k,u) \diver_z^a\A(z,k)\big)\, dz\, dk
= -\int \varphi(z) \psi(k)\, d\partial_k\eta(k,z).
\end{split}
\]
Since  the function \(k\mapsto \chi(k,u) [\A(z,u) - \A(z,k)]\) is Lipschitz, it is straightforward to check that $\partial_k\big(\chi(k,u) [\A(z,u) - \A(z,k)]\big) = -\chi(k,u)\partial_v\A(z,k)$,
hence  \eqref{f:kinetic} holds.

\bigskip
\noindent
\textsl{Step 3 (Blow--up)}.
Let 
$\eta(k,z) = \mesnu(z)\otimes \lambda_z(k)$ be the 
disintegration of the measure $\eta$ with respect to $\mesnu$, see \cite[Sect.~2.5]{AFP}.
Since  \(\H^{n-1}\res \mathcal N\) is \(\sigma\)-finite by the Radon-Nikodym Theorem we can write 
\begin{equation}\label{eq:nu}
 \mesnu = h \, \H^{n-1}\res \mathcal N+ \mesnu\res(\R^n\setminus\mathcal N)
\end{equation}
with \(h\in L^1(\H^{n-1}\res \mathcal N)\).
Let us now fix a point $z_0\in\mathcal{N}$ and for $r>0$ let us consider the following rescalings 
in the variable $z$:
\begin{equation}
\begin{gathered}
u_r(z) := u(z_0+rz),
\qquad
\A_r(z,v) := \A(z_0+rz, v),
\\		\label{rescal}
\eta_{k,r}(V) := \frac{\eta_k(z_0+r V)}{r^{n-1}}\,,
\quad
\eta_{r}(U\times V) := \frac{\eta(U\times (z_0+r V))}{r^{n-1}}\,,
\quad U\subset\R,\ V\subset\R^n\ \text{Borel}.
\end{gathered}
\end{equation}
Recall the proof of Lemma \ref{lemma:campo}: for \(\mathcal H^{n-1}\) almost every \(z_0\) in \(\mathcal N\) we have
\begin{equation}\label{eth}
h(z_0+rz) \mathcal H^{n-1}\res \frac{J-z_0}{r}\weakstarto h(z_0) \mathcal H^{n-1}\res \{\nu(z_0)\cdot z=0\}.
\end{equation}
We now claim that for \(\mathcal H^{n-1}\) almost every such \(z_0\) 
and for every $k\in\R$
\begin{equation}\label{conv1}
\begin{gathered}
\A_r(z,k) \to \Abar_{z_0}(z,k) := \A^+(z_0,k) \one_{H^+}(z) + \A^-(z_0,k) \one_{H^-}(z),\\
\partial_v \A_r(z,k) \to \partial_v \Abar_{z_0}(z,k) := \partial_v \A^+(z_0,k) \one_{H^+}(z) + \partial_v \A^-(z_0,k) \one_{H^-}(z),\\
\diver_z^a \A_r(z,k) \to 0\,,
\end{gathered}
\end{equation}
locally in \(L^1(\R^n)\), with   \(H^\pm=\{z: \pm z\cdot \nu(z_0)>0\}\). 
Indeed the first two equations follow directly from the hypotheses on $\A$, see \cite[Proposition 3.2]{ACDD}, while
the last limit in \eqref{conv1}  is a consequence of the fact that $\sup_k|\diver^a\A(z,k)|\leq \sigma^a(z)$ and that  
 \[
 \lim_{r\to 0} \frac{1}{r^{n-1}}\int_{B_r(z_0)}|\sigma^a(z)|\,dz=0\,,
 \]
for \(\mathcal H^{n-1}\) almost every point in \(\mathcal N\), see \cite[Equation 2.41]{AFP}. 
We now prove that, up to $\mathcal H^{n-1}$-negligible subset of $z_0\in\mathcal N$ it holds:
\begin{equation}\label{conv2}
\eta_r\weakstarto  h(z_0)\, \lambda_{z_0}(k)\otimes \H^{n-1}\res \mathcal \partial H^+. 
\end{equation}
To this end observe that by \cite[Equation 2.41]{AFP}, for \(\mathcal H^{n-1}\) almost every \(z_0\in \mathcal N\),
\[
\lim_{r\to 0}\frac{|\mesnu\res (\R^n\setminus \mathcal N)|(B_r(z_0))}{r^{n-1}}=0.
\]
Now it is easy to see that, up to negligible sets,
\begin{equation}\label{tm}
 \Big\{z\in \mathcal N: 0<\limsup_{r\to 0}\frac{|\mesnu\res \mathcal N|(B_r(z))}{r^{n-1}}<\infty\Big\}=\{z \in\mathcal N: h(z)>0\}.
\end{equation}
Since \(\mathcal H^{n-1}\res(\mathcal N\cap  \{h>0\})\ll \mesnu\res  \mathcal N\), \(\mathcal H^{n-1}\) almost every \(z_0\in \mathcal N\cap  \{h>0\}\) is a Lebesgue point for the measure valued map $z \mapsto \lambda_z$ with respect to \(\nu\). 
By combining this with \eqref{eth} one can argue as in Lemma \ref{lemma:campo} to deduce \eqref{conv2} on $\{h>0\}$, see for instance \cite[Proposition 9]{DLOW}. 
Finally by \eqref{tm} we have that \(\mathcal H^{n-1}\) almost every \(z_0\in \mathcal N\) satisfies  \eqref{conv2}, 
since this convergence trivially holds for $\H^{n-1}$ almost every \(z_0\in \{h=0\}\cap \mathcal N\).

\bigskip
\noindent
\textsl{Step 4 (Limiting equation and existence of traces)}. Let us take a point \(z_0\) such that \eqref{conv1} and \eqref{conv2} hold true. According to  Lemma  \ref{l:panov} below,  the sequence \((u_r)_r\) is relatively compact in \(L^1_{\rm loc} (\R^n)\). Let us  now compute the equation satisfied by any cluster point \(u_\infty\) of \((u_r)_r\). To this end, note that  $u_r$ solves
\[
\diver_z \big(\sign(u_r-k) \partial_v \left[\A_r(z, u_r) - \A_r(z,k)\right]\big)
+ \sign(u_r-k)\diver_z^a \A_r(z,k)
= \eta_{k,r}.
\]
Let $(r_j)$ be a sequence converging to $0$ such that 
\(u_{r_j} \to \uinf\) in  \(L^1(B_1)\).
Passing  to the limit 
in the kinetic equation satisfied by the function $(k,z) \mapsto \chi(k, u_{r_j}(z))$, 
\begin{equation*}
\label{f:rkinetic}
\diver_z \left[\chi(k, u_{r_j}) \partial_v \A_{r_j}(z,k)\right]
- \partial_k\left[\chi(k,u_{r_j})\diver_z^a \A_{r_j}(z,k)\right]
= -\partial_k \eta_{r_j}
\quad\text{in}\ \mathcal{D}'(\R^{n+1})\,,
\end{equation*}
and taking into account~\eqref{conv1} and~\eqref{conv2}, we obtain 
\begin{equation}
\label{f:bkinetic}
\diver_z \left[\chi(k, \uinf) \partial_v \Abar_{z_0}(z,k)\right]
= -\partial_k 
\big(h(z_0)\, \lambda_{z_0}(k)\, \H^{n-1}\res \mathcal \partial H^\pm\big)
\qquad\text{in}\ \mathcal{D}'(\R^{n+1}).
\end{equation}
In particular, 
due to the special form \eqref{conv1} of $\Abar_{z_0}$, 
in the half-space $H^{+}$ (resp.\ $H^-$),
equation~\eqref{f:bkinetic} is a transport
equation of the form
\begin{equation}
\label{f:transport}
\a^{+}(k) \cdot \nabla_z \chi(k, \uinf) = 0
\qquad
(\text{resp.}\ \a^{-}(k) \cdot \nabla_z \chi(k, \uinf) = 0),
\end{equation}
where
\[
\a^{\pm}(k) := \partial_v\A^{\pm}(z_0,k).
\]
Since, by (GLN), these vector fields are genuinely nonlinear,
we conclude that $\uinf$ must be constant on $H^+$ and on $H^-$, i.e.\
there exist $u^-, u^+\in\R$ such that
\begin{equation}\label{uinf}
\uinf = u^+\, \one_{H^+} + u^-\, \one_{H^-}
\end{equation}
compare  \cite[Proposition 7(b)]{DLOW}.
Indeed let $\bar z\in H^+$ be a Lebesgue point of $u^\infty$ and $-\|u\|_\infty -1<\bar k<u^\infty(\bar z)$ 
such that $\L^n(\{u^\infty =\bar k\})=0$.
Fix $\tau>0 $ and convolve with a nonnegative smooth kernel $\delta_\eps$ supported in $B_\eps$:  
for $\eps<\eps(\tau, \bar z)$ sufficiently small
\[
\delta_\eps * \chi(\bar k,u^\infty) (\bar z) \geq 1-\tau.
\]
Thanks to (GLN) we can choose  $n$ values $k_1,\dots,k_n$ (depending on $\tau$, $\eps$ and $\bar k$)  with $|k_n-\bar k|$ sufficiently small and  such that $\bar k<k_1<\dots<k_n$, $\{\a^+(k_i)\}$ are linearly independent and
\begin{equation}\label{11}
\delta_\eps * \chi(k_n,u^\infty) (\bar z) \geq 1-2\tau.
\end{equation}
For every $z$ the function $k\mapsto\chi(k,u^\infty(z))$ is decreasing, and so it remains when 
we convolve it with $\delta_\eps$: in particular
\begin{multline}\label{12}
\delta_\eps * \chi(\bar k,u^\infty)(z)\geq \delta_\eps * \chi(k_1,u^\infty)(z) \geq\dots\geq \delta_\eps * \chi(k_n,u^\infty)(z) \\
  \forall z\in H^+_\eps:=\{ z\cdot \nu(z_0)>\eps\}.
\end{multline}
Equation \eqref{f:transport}, which holds also for $\delta_\eps * \chi(k,u^\infty)$, implies that \(\delta_\eps * \chi(k_i,u^\infty)\) is constant along 
lines parallel to \(\a^+(k_i)\). Since the \(\{\a^+(k_i)\}\) are linearly independent, 
starting from \eqref{11} and exploiting \eqref{12} we obtain  $\rho_\eps * \chi(\bar k,u^\infty) \geq 1-2\tau$ in 
$H^+_\eps$.  Letting $\tau\downarrow 0$ we get
\[
\chi(\bar k,u^\infty)  \geq 1 \qquad \mbox{in }H^+. 
\]
Since ${\bar k}$ can be taken arbitrarily close to $u^\infty(\bar z)$,  $u^\infty$ is constantly equal to \(u^\infty(\bar z)\).
 A completely analogous argument holds for $H^-$. In particular \(\Gamma_{u,\mathcal N}(z_0)\ne \emptyset\).

\bigskip\noindent
\textsl{Step 5 (Characterization of traces)}. By \eqref{f:bkinetic} and the special form \eqref{uinf} of \(\uinf\),  we deduce that 
\begin{equation}
\label{f:trac}
\chi(k, u^+)\, \a^+(k)\cdot\nu(z_0)
- \chi(k, u^-)\, \a^-(k)\cdot\nu(z_0)
= -\partial_k  \big(h(z_0)\, \lambda_y(k)\big)
\qquad\text{in}\ \mathcal{D}'(\R).
\end{equation}
Let us now show as the above equality uniquely determines \(u^\pm\) whenever  $u^+\neq u^-$, in 
particular they do not depend on the choice of the subsequence \((r_j)\).  
To this end, let $(u_{\rho_j})$ be another converging subsequence of $(u_r)$: by Step 4 we have  
\begin{equation*}
u_{\rho_j} \to \vinf := v^+ \one_{H^+} + v^- \one_{H^-} \qquad\text{in}\ L^1(B_1),
\end{equation*}
so that the pair $(v^-, v^+)$ also satisfies~\eqref{f:trac}.
Subtracting the equation satisfied by the pair $(u^-, u^+)$ we get for almost every \(k\in \R\),
\[
[\chi(k, u^+) - \chi(k, v^+)] \, \a^+(k)\cdot\nu(z_0)
=
[\chi(k, u^-) - \chi(k, v^-)] \, \a^-(k)\cdot\nu(z_0)
\]
that is
\[
\sign(u^+ - v^+)\one_{(u^+, v^+)}(k) \, \a^+(k)\cdot\nu(z_0)
=
\sign(u^- - v^-)\one_{(u^-, v^-)}(k) \, \a^-(k)\cdot\nu(z_0).
\]
Since, again by the assumption (GNL) of genuine nonlinearity,
the functions 
\[
k\mapsto \a^{\pm}(k)\cdot\nu(z_0)
\]
cannot vanish
on any interval, 
the two intervals
$I(u^-, u^+)$ and $I(v^-,v^+)$ must coincide  \footnote{
Here, $I(a,b)$ denotes the interval $[a,b]$ if $a\leq b$ or the
interval $[b,a]$ if $b<a$.}.
If $u^-\neq u^+$, the condition $I(u^-,u^+) = I(v^-,v^+)$ can be satisfied
either in the case $v^- = u^-$, $v^+ = u^+$ or 
in the case $v^- = u^+$, $v^+ = u^-$.
On the other hand, this second possibility is excluded by the fact that the map
$r \mapsto u(y+rz)$
is continuous from $(0,1]$ to $L^1(B_1)$. Indeed, since 
\begin{gather*}
u_{r_j} \to \uinf = u^+ \one_{H^+} + u^- \one_{H^-},
\\
u_{\rho_j} \to \vinf = u^- \one_{H^+} + u^+ \one_{H^-},
\end{gather*}
we have 
\[
\int_{B_1} |u_{r_j} - \uinf| \to 0,
\qquad
\int_{B_1} |u_{\rho_j} - \uinf| \to \int_{B_1} |\vinf - \uinf| =: m\neq 0.
\]
By the continuity of the map
\[
(0,1] \ni r \mapsto \int_{B_1} u_{r}\,,
\]
and the relative compactness of the family \((u_r)_r\),  we can find a third sequence $(u_{s_j})$ such that
\begin{gather}
u_{s_j} \to w^{\infty} := w^+ \one_{H^+} + w^- \one_{H^-}\,,\notag \\
\int_{B_1} |w^{\infty} - \uinf| = \frac{m}{2} \leq \int_{B_1} |w^{\infty} - \vinf|,
\label{f:contuv}
\end{gather}
But then we must have $I(w^-,w^+) = I(u^-,u^+) = I(v^-, v^+)$, so that
either $w^- = u^-$ and $w^+=u^+$, or $w^- = u^+$ and $w^+ = u^-$,
and in each case we get a contradiction
with~\eqref{f:contuv}.

In conclusion, if $u^-\neq u^+$ then all subsequences of $(u_r)$ must converge
to the same function $\uinf$, hence the traces are uniquely determined.

\smallskip
In the case $u^- = u^+$, reasoning as above we can always conclude that 
$w^-=w^+$ for every $(w^-,w^+)\in \Gamma_{u,\mathcal N}$. Moreover exploiting again the continuity of
 the map $r \mapsto \int_{B_1} u_{r}$ we get that $\Gamma_{u,\mathcal N}$ is a compact connected set. 
Finally the Rankine--Hugoniot condition follows from Lemma~\ref{lemma:campo}, thus concluding the proof.
\end{proof}

The following Lemma has been used in the proof of Theorem \ref{thm:tracce}.

\begin{lemma}[Strong pre--compactness of blow-ups]
\label{l:panov}
The family $(u_r)$ defined in \eqref{rescal} is pre--compact in  $L^1(B_1)$.
\end{lemma}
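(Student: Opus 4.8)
The plan is to obtain strong precompactness of the blow-up family $(u_r)$ from a velocity-averaging / compensated-compactness argument applied to the rescaled kinetic equations, following Panov's approach to compactness for genuinely nonlinear conservation laws. First I would recall that each $u_r$ solves the rescaled entropy inequalities, and consequently the rescaled kinetic functions $\chi_r(k,z) := \chi(k,u_r(z))$ satisfy
\[
\diver_z\left[\chi_r(k,\cdot)\,\partial_v\A_r(z,k)\right] - \partial_k\left[\chi_r(k,\cdot)\diver_z^a\A_r(z,k)\right] = -\partial_k\eta_r,
\]
with $\A_r(z,k)\to\Abar_{z_0}(z,k)$, $\partial_v\A_r\to\partial_v\Abar_{z_0}$ strongly in $L^1_{\rm loc}$, $\diver_z^a\A_r\to 0$ in $L^1_{\rm loc}$ by \eqref{conv1}, and $|\eta_r|$ uniformly bounded on compact sets thanks to \eqref{bbq}, \eqref{eq:nu} and the estimates of Step 1. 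The key structural point is that, since $|\chi_r|\le 1$ and $u_r$ is uniformly bounded in $L^\infty$, the family $(\chi_r)$ is bounded in $L^\infty(\R^{n+1})$, and one may extract a subsequence with $\chi_r\weakstarto f$ in $L^\infty$, where a priori $f$ need not be a characteristic function $\chi(k,u_\infty(z))$.

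The heart of the argument is then a velocity averaging lemma: the right-hand sides $-\partial_k\eta_r$ are derivatives in $k$ of a family of measures with locally uniformly bounded total variation, and the left-hand side transport operators $\a_r(z,k)\cdot\nabla_z$ (with $\a_r := \partial_v\A_r$) are genuinely nonlinear in the limit by (GNL) — the limiting fields $\a^\pm(k) = \partial_v\A^\pm(z_0,k)$ satisfy $\Leb1(\{k: \a^\pm(k)\cdot\xi = 0\})=0$ for every $\xi\in S^{n-1}$. Genuine nonlinearity of the velocity field together with the specific form of the $k$-derivative on the right allows one to invoke a compensated-compactness / averaging result (in the spirit of \cite{LPT}, \cite{Pan2007} and \cite{DLOW}) to conclude that the averages $\int \chi_r(k,z)\,\psi(k)\,dk$ are strongly precompact in $L^1_{\rm loc}(\R^n)$ for all $\psi\in C_c(\R)$; a variant handles the two half-spaces $H^\pm$ separately since $\Abar_{z_0}$ is piecewise constant in $z$ across $\partial H^+$. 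Since $\chi(k,u)$ for fixed $z$ is monotone in $k$ and takes values in $\{-1,1\}$ off a single point, strong $L^1_{\rm loc}$ convergence of enough such averages forces the weak-$*$ limit $f$ to be again of the form $\chi(k,u_\infty(z))$ and forces $u_r\to u_\infty$ in $L^1_{\rm loc}(\R^n)$, in particular in $L^1(B_1)$. (Alternatively, one may run Panov's original argument: use the kinetic equation to show the Young measures generated by $(u_r)$ are Dirac masses, which is equivalent to strong convergence.)

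The main obstacle is making the velocity-averaging step rigorous in this variable-coefficient, low-regularity setting: the transport coefficients $\a_r(z,k)$ depend on the spatial variable (not merely on $k$), they are only $BV$-regular and converge merely in $L^1_{\rm loc}$, and the right-hand sides are only measures. The standard $L^2$-based averaging lemmas do not apply directly; one must use the $L^1$/measure-data versions available precisely under genuine nonlinearity, and one must exploit that after blow-up the coefficients become \emph{piecewise constant in $z$} (constant on $H^+$ and on $H^-$), so on each half-space the equation is a genuine constant-coefficient kinetic transport equation $\a^\pm(k)\cdot\nabla_z\chi = -\partial_k(\text{measure})$ to which the classical theory applies, while the interface contribution is supported on the lower-dimensional set $\partial H^+$ and is controlled by \eqref{conv2}. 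I would carry out the argument half-space by half-space, cite the relevant averaging statement from \cite{Pan2007} or \cite{DLOW} for each piece, and then glue using the uniform $L^\infty$ bound and the fact that $B_1 = (B_1\cap H^+)\cup(B_1\cap H^-)$ up to a null set.
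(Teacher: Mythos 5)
Your overall strategy (kinetic formulation plus velocity averaging under (GNL)) is in the right spirit --- indeed this is what ultimately powers the compactness theorem the paper invokes --- but as written the plan has a concrete gap at its central step. You claim that ``on each half-space the equation is a genuine constant-coefficient kinetic transport equation'' to which classical averaging applies, with the discrepancy ``supported on the lower-dimensional set $\partial H^+$ and controlled by \eqref{conv2}''. This conflates the equation satisfied by the blow-up \emph{limit} with the equations satisfied by the \emph{sequence}: for each fixed $r>0$ the kinetic equation for $\chi(k,u_r)$ has the variable coefficients $\partial_v\A_r(z,k)=\partial_v\A(z_0+rz,k)$ on \emph{all} of $H^\pm$, not the constant fields $\a^\pm(k)$; the replacement error $\diver_z\bigl[\chi(k,u_r)\,(\partial_v\A_r-\partial_v\Abar_{z_0})\bigr]$ lives throughout the half-spaces and only tends to zero in the limit $r\to0$. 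Moreover \eqref{conv2} controls the rescaled entropy measures $\eta_r$, not this coefficient error, so it cannot be used to absorb it. To make your route work you would have to show that this error is compact in a negative Sobolev space compatible with the measure-data averaging lemma you intend to cite, and that the averaging result tolerates such perturbations --- which is precisely the nontrivial content you leave unaddressed. A related omission: when the coefficient error is composed with $u_r(z)$ (or integrated against $\chi(k,u_r)$), pointwise-in-$k$ convergence \eqref{conv1} is not by itself enough; one needs smallness \emph{uniform in the state variable}, which requires an extra argument (equi-Lipschitz continuity in $v$ from (H2) plus a diagonal/Ascoli argument over a countable dense set of values).

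For comparison, the paper's proof sidesteps the kinetic machinery at the level of the sequence: it writes $\diver_z\Abar_{z_0}(z,u_r)=-\diver_z q_r$ with $q_r(z)=\A_r(z,u_r(z))-\Abar_{z_0}(z,u_r(z))$, proves $\sup_{|v|\le\|u\|_\infty}|\A_{r_j}(z,v)-\Abar_{z_0}(z,v)|\to0$ a.e.\ (this is where the equi-Lipschitz/diagonal argument enters) so that $q_{r_j}\to0$ in $L^2(B_1)$, hence $\diver_z\Abar_{z_0}(z,u_r)$ is precompact in $W^{-1,2}(B_1)$, and then cites Panov's strong precompactness theorem \cite[Thm.~6]{Pan1} for the fixed, genuinely nonlinear flux $\Abar_{z_0}$ as a black box. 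If you wish to keep your averaging-based outline, you should either reorganize it into exactly this perturbation structure, or supply the missing compactness statement for kinetic equations whose coefficients converge only in $L^1_{\rm loc}$ with right-hand sides of the form $\partial_k(\text{measure})$ plus a vanishing divergence; as it stands, the proposal does not yet constitute a proof.
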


\begin{proof}[Proof of Lemma \ref{l:panov}]
For every $r>0$, the function $u_r$ is a solution to
\[
\diver_z \A_r(z, u_r(z)) = 0,
\]
hence
\[
\diver_z \Abar_{z_0}(z, u_r(z)) = -\diver_z \left[\A_r(z,u_r(z)) - \Abar_{z_0}(u_r(z))\right].
\]
We claim that the family of functions
\[
q_r(z) := \A_r(z,u_r(z)) - \Abar_{z_0}(z,u_r(z))
\]
is pre--compact in $L^2(B_1)$, so that
$(\diver_z \Abar_{z_0}(z,u_r(z)))_r$ is pre--compact in the negative Sobolev space $W^{-1,2}(B_1)$.
If this condition is satisfied, then by
\cite[Thm.~6]{Pan1} we can conclude that 
$(u_r)$ is pre--compact in the strong $L^1(B_1)$ topology.

Let us consider the functions
\[
f_{r,z}(v) := |\A_r(z,v) - \Abar_{z_0}(z,v)|,
\qquad r>0,\ z\in B_1,\ v\in\R.
\]
By \eqref{conv1} 
 \begin{equation}\label{f:point}
\lim_{r\downarrow 0} f_{r,z}(v) = 0
\qquad \text{for every}\ z\in B_1\setminus D_0\ \text{and}\ \forall v\in\R,
\end{equation}
where $D_0\subset B_1$ is a set of Lebesgue measure $0$.
Moreover
\[
|f_{r,z}(v) - f_{r,z}(v')| \leq
|\A_r(z,v) - \A_r(z,v')| + |\Abar_{z_0}(z,v) - \Abar_{z_0}(z,v')|
\leq 2 \|\partial_v \A\|_{\infty} |v-v'|,
\]
hence $(f_{r,z})_r$ is an equi-Lipschitz family of functions converging 
 pointwise to $0$ for every  $z\in B_1\setminus D_0$.

Let $L = \|u\|_\infty$
and let $(v_k)\subset [-L, L]$ be a countable dense set in $[-L, L]$.
Using a diagonal argument, we can construct a sequence $(r_j)$ converging to $0$
such that
\[
\lim_{j\to+\infty} f_{r_j,z} (v_k) = 0
\qquad
\forall z\in B_1\setminus D,\
\forall k\in\N,
\]
where $D\supseteq D_0$ is a set of Lebesgue measure $0$.

Using the classical argument in the proof of the Ascoli--Arzel\`a compactness theorem,
we have that,
for every $z\in B_1\setminus D$,
the sequence $(f_{r_j,z})_j$ converges uniformly to $0$ in $[-L, L]$.
In other words, 
\[
g_j(z) := \sup_{|v|\leq L} | \A_{r_j}(z,v) - \Abar_{z_0}(z,v) | \to 0.
\qquad \forall z\in B_1\setminus D.
\]
Since the functions $g_j$ are equi-bounded, they converge to $0$ in $L^2(B_1)$.
Moreover,
\[
|q_{r_j}(z)|^2  := |\A_r(z,u_{r_j}(z)) - \Abar_{z_0}(z,u_{r_j}(z))|^2
\leq g_j(z)^2,
\]
so that the sequence $(q_{r_j})_j$ converges to $0$
in $L^2(B_1)$
and the claim is proved.
\end{proof}

\section{Proof of Theorem \ref{thm:representation}}\label{sec:representation}

In this section we prove Theorem \ref{thm:representation}. 
To this end we will need two technical lemmas: 
the first one is a slight generalization of classical arguments used in \cite{Kruz}. 
The second one allows to study the limiting behavior of the incremental quotient of \(\A\) in the spirit of  \cite[Thm.~2.4]{Ambrosio2004} and \cite[Lemma II.1]{DiPL} and it is crucial in the proof of 
Theorem~\ref{thm:representation}. 
For the sake of exposition we postpone the proofs of both lemmas  at the end of the section.

\begin{lemma}\label{lemma paraculo}
Let $f:\R^n\times\R\rightarrow \R^m$ satisfy the following assumptions:
\begin{itemize}
\item $z\mapsto \sup_v|f(z,v)|\in L^1_{\rm loc}(\R^n)$;
\item $|f(z,v) - f(z,v')|\leq g(z)\omega(|v-v'|)$ for some $g\in L^1_{\rm loc}$ and some modulus of continuity $\omega$. 
\end{itemize}
Then for every $u, v\in L^\infty_{\rm loc}(\R^n)$ 
\[
\begin{gathered}
|f(z+ \tau,u(z)) - f(z,u(z))| \rightarrow 0 
\\
\sign(u(z+\tau) - v(z)) [f(z+\tau, u(z+\tau)) - f(z,v(z))] \rightarrow \sign(u(z) - v(z)) [f(z, u(z)) - f(z,v(z))] 
\end{gathered}
\]
in $L^1_{\rm loc}$ as ${\tau\to 0}$.
\end{lemma}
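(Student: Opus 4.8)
The plan is to reduce both statements to two elementary facts: the continuity of translations in $L^1_{\rm loc}$ applied to the fixed slices $z\mapsto f(z,v_0)$, $v_0\in\R$, and the equicontinuity in the second variable provided by the second hypothesis. Fix a compact set $K\subset\R^n$, a compact neighbourhood $K'$ of $K$, and put $L:=\|u\|_{L^\infty(K')}+\|v\|_{L^\infty(K')}$, so that for $|\tau|$ small enough all the relevant values $u(z)$, $v(z)$, $u(z+\tau)$ (with $z\in K$) lie in $[-L,L]$; note $\omega(2L)<\infty$ since a modulus of continuity is nondecreasing and finite.

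\textbf{First convergence.}
Given $\delta>0$, choose a finite $\delta$-net $\{v_1,\dots,v_N\}$ of $[-L,L]$ and set $\tilde f(z,w):=f(z,v_{i(w)})$, where $v_{i(w)}$ is a nearest net point to $w$ selected by some measurable rule. The second hypothesis yields $|f(z,w)-\tilde f(z,w)|\le g(z)\,\omega(\delta)$ for every $z$ and every $w\in[-L,L]$, hence, for a.e.\ $z\in K$,
\[
|f(z+\tau,u(z))-f(z,u(z))|\le |\tilde f(z+\tau,u(z))-\tilde f(z,u(z))|+\big(g(z+\tau)+g(z)\big)\omega(\delta).
\]
Since $\tilde f(z+\tau,u(z))-\tilde f(z,u(z))=f(z+\tau,v_{i(u(z))})-f(z,v_{i(u(z))})$ and $i(u(z))\in\{1,\dots,N\}$, the first term on the right is bounded by $\sum_{i=1}^N|f(z+\tau,v_i)-f(z,v_i)|$, and each slice $z\mapsto f(z,v_i)$ lies in $L^1_{\rm loc}$, being dominated by $\sup_v|f(\cdot,v)|$. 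Integrating over $K$, using $\int_K g(z+\tau)\,dz\le\|g\|_{L^1(K')}$ for small $|\tau|$ and the $L^1$-continuity of translations for the finitely many slices, we obtain $\limsup_{\tau\to0}\int_K|f(z+\tau,u(z))-f(z,u(z))|\,dz\le 2\,\omega(\delta)\,\|g\|_{L^1(K')}$, and letting $\delta\to0$ proves the first claim.

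\textbf{Second convergence.}
Writing $f(z+\tau,u(z+\tau))-f(z,v(z))=\big(f(z+\tau,u(z+\tau))-f(z,u(z+\tau))\big)+\big(f(z,u(z+\tau))-f(z,v(z))\big)$, we split the integrand as $A_\tau(z)+B_\tau(z)$, where $A_\tau(z):=\sign(u(z+\tau)-v(z))\big(f(z+\tau,u(z+\tau))-f(z,u(z+\tau))\big)$. Pointwise $|A_\tau(z)|\le|f(z+\tau,u(z+\tau))-f(z,u(z+\tau))|$, and the change of variables $y=z+\tau$ turns $\int_K|A_\tau|$ into $\int_{K+\tau}|f(y,u(y))-f(y-\tau,u(y))|\,dy$, which tends to $0$ by the first claim applied with $-\tau$ in place of $\tau$; hence $A_\tau\to0$ in $L^1_{\rm loc}$. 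For $B_\tau(z)=G_z(u(z+\tau),v(z))$ with $G_z(a,b):=\sign(a-b)[f(z,a)-f(z,b)]$, the key point is that each $G_z$ is \emph{continuous} on $\R^2$: off the diagonal this is clear, while on the diagonal $|G_z(a,b)|=|f(z,a)-f(z,b)|\le g(z)\,\omega(|a-b|)\to0$; moreover $|G_z(a,b)|\le g(z)\,\omega(|a-b|)$ everywhere, so $(z,a,b)\mapsto G_z(a,b)$ is a Carathéodory map. Given a sequence $\tau_j\to0$, continuity of translations lets us extract a subsequence with $u(\cdot+\tau_{j_k})\to u$ a.e.\ on $K$; then $B_{\tau_{j_k}}(z)\to G_z(u(z),v(z))=\sign(u(z)-v(z))[f(z,u(z))-f(z,v(z))]$ a.e., and the domination $|B_{\tau_{j_k}}(z)|\le g(z)\,\omega(2L)\in L^1(K)$ gives $L^1(K)$-convergence by dominated convergence. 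Since every sequence $\tau_j\to0$ admits such a subsequence with the same limit, $B_\tau\to\sign(u-v)[f(\cdot,u)-f(\cdot,v)]$ in $L^1_{\rm loc}$; adding the contributions of $A_\tau$ and $B_\tau$ concludes the proof.

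\textbf{Main obstacle.}
There is no deep difficulty: the only point requiring care is the discontinuity of $\sign$, which is neutralized by the observation that $(a,b)\mapsto\sign(a-b)[f(z,a)-f(z,b)]$ is continuous because the jump of $\sign$ across the diagonal is multiplied by $[f(z,a)-f(z,b)]$, a factor that vanishes there at rate $g(z)\omega(|a-b|)$. Everything else is the finite-net approximation in the $v$ variable combined with the standard $L^1$-continuity of translations and a routine subsequence plus dominated-convergence argument to upgrade a.e.\ convergence to $L^1_{\rm loc}$.
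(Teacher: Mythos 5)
Your proof is correct and follows essentially the same route as the paper: the first convergence is obtained by discretizing the values of $u$ (your finite $\delta$-net playing the role of the paper's simple functions with values in a countable dense set) together with $L^1$-continuity of translations and the $g(z)\,\omega$ modulus, and the second is reduced to the first plus the key observation that $(a,b)\mapsto\sign(a-b)[f(z,a)-f(z,b)]$ is continuous (the paper phrases this as a $2\omega$-modulus, you as continuity plus a subsequence/dominated-convergence argument). You merely spell out details the paper leaves implicit, such as the change of variables handling $f(z+\tau,u(z+\tau))-f(z,u(z+\tau))$.
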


\begin{lemma}[Uniform differential quotients]
	\label{lemma:luigi}
	Let $\A$ satisfy (H1)--(H5) and let $w\in\R^n$. 
	Then there exists a measurable set $D = D_w\subset\R^n$, with $\L^n(D) = 0$, such that
	the difference quotients for $\A$ can be canonically written as
	\[
	\frac{\A(z+\eps w, v) - \A(z,v)}{\eps}
	= \A^1_\eps(z,v) + \A^2_\eps(z,v)
	\]
	where $\A^1_\eps$ and $\A^2_\eps$ satisfy the following properties:
	\begin{itemize}
		\item[(i)]
		$\displaystyle
		\lim_{\eps\downarrow 0} \A^1_\eps(z,v) =
		\nabla_z \A(z, v)\cdot w,
		\qquad \forall v\in\R\ \text{and}\ z\in\R^n\setminus D;$
		\item[(ii)]
		The family of functions $h_\eps\colon\R^n\to\R$ defined by
		\[
		h_\eps(z) := |w|\sup_{v\in\R} \left| \A^1_\eps(z,v)\right|
		\]
		is equi-integrable;
		\item[(iii)]
		For every compact set $K\subset\R^n$ we have
		\[
		\int_K \sup_{v\in\R} \left|\A^2_\eps(z,v)\right|\, dz \leq
		\sigma^s(K_{\eps |w|}) |w|,
		\]
		where $K_{\tau} := K + B_\tau(0)$.
	\end{itemize}
\end{lemma}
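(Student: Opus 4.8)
The plan is to prove Lemma~\ref{lemma:luigi} by a mollification argument adapted to the $SBV$ structure of $\A(\cdot,v)$, treating the absolutely continuous part and the jump part of $D_z\A(\cdot,v)$ separately but uniformly in $v$. Concretely, fix a nonnegative mollifier $\rho\in C^\infty_c(B_1)$ with $\int\rho=1$, set $\rho_\delta(z):=\delta^{-n}\rho(z/\delta)$, and write $\A^\delta(z,v):=(\A(\cdot,v)*\rho_\delta)(z)$. Then decompose the difference quotient as
\[
\frac{\A(z+\eps w,v)-\A(z,v)}{\eps}
= \underbrace{\frac{\A^{\delta(\eps)}(z+\eps w,v)-\A^{\delta(\eps)}(z,v)}{\eps}}_{=:\,\A^1_\eps(z,v)}
+ \underbrace{\frac{(\A-\A^{\delta(\eps)})(z+\eps w,v)-(\A-\A^{\delta(\eps)})(z,v)}{\eps}}_{=:\,\A^2_\eps(z,v)},
\]
where $\delta(\eps)\downarrow 0$ is a suitably slow rate, e.g. $\delta(\eps)=\eps^{1/2}$, to be fixed at the end. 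The point is that $\A^1_\eps$ is a quotient of a \emph{smooth} function, hence by the fundamental theorem of calculus $\A^1_\eps(z,v)=\int_0^1 \nabla_z\A^{\delta(\eps)}(z+s\eps w,v)\cdot w\,ds$, while $\A^2_\eps$ carries the ``bad'' part, controlled by the total variation concentrated near $K$.

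For part (i), I would use that for every $v$ and every $z\notin D_w$ (with $D_w$ the complement of the set of approximate continuity points of $z\mapsto\nabla_z\A(z,v)$ chosen along a countable dense set of $v$'s and then extended to all $v$ by the Lipschitz-in-$v$ bound (H2)--(H3) and (H4)), one has $\nabla_z\A^{\delta}(z,v)\to\nabla_z\A(z,v)$ as $\delta\downarrow 0$ at Lebesgue points of the absolutely continuous part, plus the shift $z+s\eps w\to z$; combining these two limits with $\delta(\eps)\downarrow0$ gives $\A^1_\eps(z,v)\to\nabla_z\A(z,v)\cdot w$. The uniformity in $v$ needed to pass from a countable dense set to all $v$ is exactly what (H3) (equi-Lipschitz in $v$ for $\partial_v\A$, hence for the relevant quantities) and (H4) (Lipschitz in $v$ for $\nabla_z\A$, with $L^1$ constant $g$) provide. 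For part (ii), the supremum over $v$ of $|\A^1_\eps(z,v)|$ is bounded by $\int_0^1 \sup_v|\nabla_z\A^{\delta(\eps)}(z+s\eps w,v)|\,ds$, and $\sup_v|\nabla_z\A^{\delta}(\cdot,v)|$ is dominated (using (H5) and the definition of $\sigma$) by $\sigma^a*\rho_\delta + (\text{mollification of the singular part})$; since $\sigma\in L^1$ with $\sigma(\R^n)<\infty$, the family $\{\sigma * \rho_{\delta(\eps)}\}_\eps$ together with the small shift is equi-integrable (a convolution of a fixed finite measure with an approximate identity gives an equi-integrable family of $L^1$ functions on bounded sets; equi-integrability is preserved under translation by $\eps|w|\to0$). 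This yields equi-integrability of $h_\eps$.

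For part (iii), I would estimate directly: $\A^2_\eps(z,v)=\eps^{-1}[(\A-\A^{\delta})(z+\eps w,v)-(\A-\A^{\delta})(z,v)]$, and since $(\A-\A^\delta)(\cdot,v)\in BV$ with $D_z(\A-\A^\delta)(\cdot,v)=D_z\A(\cdot,v)-D_z\A(\cdot,v)*\rho_\delta$, the standard $BV$ bound on difference quotients gives
\[
\int_K \sup_v |\A^2_\eps(z,v)|\,dz
\le \frac{1}{\eps}\int_0^{\eps|w|}\Big(\bigvee_v |D_z(\A-\A^{\delta(\eps)})(\cdot,v)|\Big)(K_{\tau})\,d\tau \cdot \frac{|w|}{|w|},
\]
and one shows $\bigvee_v|D_z(\A-\A^{\delta})(\cdot,v)| \le (\sigma - \sigma^a\Leb n) + (\text{something absorbed})$, more precisely the singular part is not increased by subtracting a mollification and the absolutely continuous contribution can be merged into $\A^1_\eps$ by redefining the split; after taking $\delta(\eps)$ slow enough the bound reduces to $\sigma^s(K_{\eps|w|})|w|$. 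I expect the main obstacle to be exactly this bookkeeping: arranging the split so that \emph{all} of the absolutely continuous mass lands in $\A^1_\eps$ (keeping (i) and (ii) clean) while $\A^2_\eps$ is controlled \emph{solely} by $\sigma^s$, uniformly in $v$, and verifying that the rate $\delta(\eps)$ can be chosen simultaneously slow enough for (i) to converge and fast enough for (ii)--(iii); the uniformity over the (uncountably many) values of $v$, handled via (H2)--(H4), is the technical heart.
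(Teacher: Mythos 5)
There is a genuine gap, and it sits exactly where you yourself flagged the ``main obstacle''. With your split, $\A^1_\eps$ is the difference quotient of the full mollification $\A^{\delta(\eps)}=\A(\cdot,v)*\rho_{\delta(\eps)}$, so $\A^1_\eps(z,v)=\int_0^1\nabla_z\A^{\delta(\eps)}(z+s\eps w,v)\cdot w\,ds$ contains the term $\bigl(D^s_z\A(\cdot,v)*\rho_{\delta(\eps)}\bigr)$. This kills (ii): the family $\{\mu*\rho_\delta\}_\delta$ is \emph{not} equi-integrable when $\mu$ has a nontrivial singular part (it carries the fixed mass $|\mu|(\R^n)$ on sets of Lebesgue measure tending to $0$; think of $\mu=\delta_0$), so your claim that ``a convolution of a fixed finite measure with an approximate identity gives an equi-integrable family'' is false precisely in the situation the lemma is designed for, namely $\sigma^s\neq 0$. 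If instead you push the mollified singular part into $\A^2_\eps$ (or, as you suggest, ``merge the absolutely continuous contribution into $\A^1_\eps$''), then (iii) fails in the stated sharp form: $D_z(\A-\A^{\delta})(\cdot,v)$ is not dominated by $\sigma^s$ alone, since it contains both $(\nabla_z\A-\nabla_z\A*\rho_\delta)\Leb{n}$ and the mollified jump part, which is smeared over distance $\delta(\eps)\gg\eps|w|$; the best you can get along these lines is a bound of the type $C\,\sigma^s(K_{\eps|w|+\delta(\eps)})|w|+o(1)$, not $\sigma^s(K_{\eps|w|})|w|$. No choice of rate $\delta(\eps)$ resolves this, because (ii) forbids the singular mass to sit in $\A^1_\eps$ while (iii) forbids it to be spread at scale $\delta(\eps)$ inside $\A^2_\eps$. (A secondary point: in (i) you also need the uniformity in $v$ of the vanishing of the mollified singular contribution, since (H4) only controls the $v$-dependence of the absolutely continuous parts; this is fixable, unlike the two issues above.)

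The paper avoids mollification altogether and obtains the decomposition \emph{exactly}, by slicing: after reducing to $w=e_n$, for $v_j$ in a countable dense set the one-dimensional restrictions $t\mapsto\A(z',z_n+t,v_j)$ are $BV$ for a.e.\ $z'$, and one sets $\A^1_\eps(z,v_j):=\int_0^1\nabla_z\A(z',z_n+\eps t,v_j)\cdot e_n\,dt$, i.e.\ the segment average of the absolutely continuous directional derivative (this is Ambrosio's difference-quotient decomposition for $BV$ functions). Then $\A^2_\eps$ is, slice by slice, exactly $\eps^{-1}D^s\varphi^j_{z'}([z_n,z_n+\eps])$, which after Fubini gives (iii) with the exact enlargement $K_{\eps|w|}$ and constant $|w|$; (ii) follows from $h_\eps(z)\le\int_0^1\sigma^a(z',z_n+\eps t)\,dt$ via a De la Vall\'ee Poussin superlinear function and Jensen/Dunford--Pettis; and (i) is one-dimensional Lebesgue differentiation plus Ambrosio's result, extended from the countable dense set of $v$'s to all $v$ by the Cauchy estimate coming from (H4) (this last extension step you do have essentially right). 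If you want to salvage a mollification-type argument, you must separate $\nabla_z\A$ from $D^s_z\A$ \emph{before} mollifying, and even then (iii) only holds with an extra vanishing error term rather than as stated.
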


\begin{proof}[Proof Theorem \ref{thm:representation}]
We divide the proof into several steps:

\medskip
\noindent
{\em Step 1: Doubling of variables}. We follow the classical technique of Kruzhkov \cite{Kruz}.  
Let  \(u(z)\) and \(v(z')\) be WES: let us set \(k=v(z')\) in  \eqref{eq:ent} for \(u\) and   \(k=u(z)\) in \eqref{eq:ent} for \(v\). 
Let us also choose a 
 test 
function \(\Phi(z,z')=\varphi(z+z')\delta_\eps(z-z')\) where \(\varphi\in C_c^1(\R^n)\) is nonnegative and  \(\delta_\eps\) 
is the usual smooth  approximation of the identity in \(0\):
\[
\delta_\eps(\zeta)=\frac{1}{\eps^n} \psi\big(\zeta/\eps\big)\,\qquad \psi \in C_c^1(B_1),\quad \int\psi=1,\quad \psi(z) = \psi(-z).
\]
Multiplying both equations by $\Phi$, integrating in \(z\) and \(z'\) and subtracting the corresponding inequalities we obtain
\[
\begin{split}
\iint &\Big\{\delta_\eps(z-z')\nabla \vphi(z+z')+\vphi(z+z')\nabla \delta_\eps(z-z')\Big\}\sign\big(u(z)-v(z')\big)\big(\A(z,u(z))-\A(z,v(z'))\big) \\
-&\sign\big(u(z)-v(z')\big)\Div^a_z\A(z,v(z'))\vphi(z+z')\delta_\eps(z-z')\\
+&\Big\{\delta_\eps(z-z')\nabla \vphi(z+z')-\vphi(z+z')\nabla \delta_\eps(z-z')\Big\}\sign\big(v(z')-u(z)\big)\big(\A(z',v(z'))-\A(z',u(z))\big)\\
-&\sign\big(v(z')-u(z)\big)\Div^a_{z'}\A(z',u(z))\vphi(z+z')\delta_\eps(z-z')  dzdz' \ge- 2\iint \delta_\eps(z-z')\vphi(z+z')dz'd\mu(z).
\end{split}
\]
This can be written as 
\begin{equation}
\label{kru1}
I^\eps_1-I^\eps_2+I^\eps_3\ge- 2\iint \delta_\eps(z-z')\vphi(z+z')dz'd\mu(z)
\end{equation}
where
\[
\begin{split}
I^\eps_1= \iint & \psi(w)\nabla \vphi(2z-\eps w) \sign\big((u(z)-v(z -\eps w)\big)\\
&\times\Big\{\A(z,u(z))+\A(z - \eps w ,u(z))-\A(z,v(z - \eps w))-\A(z- \eps w,v(z- \eps w)) \Big\}\,  dw dz   ,
\\
%
I_2^\eps=\iint &  \vphi(2z-\eps w)\sign\big(u(z)-v(z-\eps w)\big)\\
&\times \Big\{\nabla \psi(w)\frac{\A(z-\eps w,u(z))-\A(z,u(z))}{\eps}-\psi(w)\Div^a_{z}\A(z-\eps w,u(z))\Big\}\,dwdz,\\
I_3^\eps=\iint & \vphi(2z+\eps w)\sign\big(u(z+\eps w)-v(z)\big)\\
&\times \Big\{\nabla \psi(w)\frac{\A(z,v(z))-\A(z+\eps w,v(z))}{\eps}-\psi(w)\Div^a_{z}\A(z+\eps w,v(z))\Big\}  dwdz.
\end{split}
\]
Regarding $I^\eps_1$,  Lemma \ref{lemma paraculo} implies that 
\begin{equation}
\label{kru2}
I^\eps_1\to 2 \int \nabla \varphi(2z)\sign\big((u(z)-v(z)\big) \,  dz\\
\big(\A(z,u(z))-\A(z,v(z))\big).
\end{equation}
We will  now show that
\begin{equation}
\label{eq:incremento}
\limsup_{\eps \to 0} |I_2^\eps-I_3^\eps|\le C \|\varphi\|_{\infty} |\sigma^s|(\spt \vphi).
\end{equation}
This, together with \eqref{kru1} and \eqref{kru2},  will then give that, in the sense of distributions,
\begin{equation}\label{eq:Katouv}
\Div_{z} \Big(\sign\big((u(z)-v(z)\big)\\
\big(\A(z,u(z))-\A(z,v(z))\big)\Big)\le 2\mu + C |\sigma^s|=: \beta
\end{equation}
where \((\mu + C |\sigma^s|)(\R^n\setminus \mathcal N)=0\).
In turn the left hand side of \eqref{eq:Katouv} is a signed measure, 
which we denote by $\alpha$, for which:
\[
\alpha\leq \alpha^+ = \alpha^+\res\mathcal N = (\alpha \res \mathcal N)^+ \leq \beta. 
\]
Since the map \[
(u,v)\mapsto \sign(u-v\big)\big(\A(z,u)-\A(z,v)\big),
\]
is Lipschitz and 
\[
\A(z_0+\eps z ,v)\to \A^\pm(z_0,v)\qquad\textrm{in \(L^1_{\rm loc}\) for every \(v\in \R\)},
\]
by arguing as in Lemma \ref{lemma paraculo}  the traces of the vector field 
\[
z\mapsto \sign\big((u(z)-v(z)\big)\big(\A(z,u(z))-\A(z,v(z))\big)
\]
 exist for \(\mathcal H^{n-1}\) almost every \(z\in \mathcal N\)  and  are given by 
\[
\sign\big((u^\pm(z)-v^\pm(z)\big)\big(\A^\pm(z,u^\pm(z))-\A^\pm(z,v^\pm(z))\big).
\]
A direct application of  Lemma \ref{lemma:campo} yields the desired representation \eqref{W}.

To show uniqueness of the traces at points where  $w(z)\neq 0$  we note that we only have to discuss  the case when (say) \(v^-(z)=v^+(z)=v\) and \(u^-(z)\ne u^+(z)\), otherwise either the traces are unique by Theorem \ref{thm:tracce} or \(w=0\). The Rankine--Hugoniot condition gives
\begin{equation}\label{sticazzi}
w(z) = [ \sign (u^+(z) - v) - \sign (u^-(z) - v)  ] [ \A^+(z,u^+(z)) - \A^+(z,v) ]\cdot \nu(z).
\end{equation}
Moreover we know by  Theorem \ref{thm:tracce} that if \(\Gamma_{v,\mathcal N}\) is not a singleton it contains pairs \((v',v')\) with \(v'\) ranging in a non trivial interval \([a,b]\). Being $w$  uniquely determined and non zero we have that~\eqref{sticazzi} holds for any such \(v'\in [a,b]\) and that \(v'\in I(u^-,u^+)\). 
This implies that 
\begin{equation*}
 [ \A^+(z,v') - \A^+(z,v) ]\cdot \nu(z) = 0\qquad \forall\, v,v'\in [a,b]\,,
\end{equation*}
contradicting the genuine nonlinearity  assumption (GLN). 

\smallskip

In order to conclude the proof of the Theorem we only have to show the validity of  \eqref{eq:incremento}.  
According to Lemma~\ref{lemma:luigi} above we can write
\[
\frac{\A(z-\eps w,u(z))-\A(z,u(z))}{\eps}=\A_{\eps,w}^1(z)+\A^2_{\eps,w}(z)\,\qquad
\]
where
\[
\A_{\eps,w}^1(z)\stackrel{L^1_{\rm loc}}{\longrightarrow}- \nabla \A(z,u(z)) \cdot w
\]
and 
\[
\int dz |\A^2_{\eps,w}(z) |\varphi(z)\le  |w| \|\varphi\|_{\infty} |\sigma^s|((\spt \vphi)_{\eps |w|})\,.
\]
Hence, by also using Lemma \ref{lemma paraculo}, we obtain that 
 \[
 \begin{split}
 I_2^\eps = {} & \iint  \vphi(2z-\eps w)\sign\big(u(z)-v(z-\eps w)\big)
 \\ & \times \big\{-\nabla \psi(w)\nabla \A(z,u(z)) \cdot w-\psi(w)\Div^a_{z}\A(z,u(z))\big\}\,dwdz
+ R^\eps_{1}+R^\eps_{2}\,,
 \end{split}
 \]
where
\[
\limsup_{\eps \to 0} |R_{1}^\eps|\le C(\psi) \|\varphi\|_{\infty} |\sigma^s|(\spt \vphi)\quad\textrm{and}\quad \lim_{\eps\to 0} |R_{2}^{\eps}|=0.
\]
By applying the same decomposition to \(I_{3}^\eps\) we obtain, after a change of variable,  that 
\[
\begin{split}
\limsup_{\eps\to 0} |I_2^\eps-I_3^\eps|&\le 2C(\psi) \|\varphi\|_{\infty} |\sigma^s|(\spt \vphi)
\\
&+\limsup_{\eps \to 0}\Big|\iint  \vphi(2z-\eps w)\sign\big(u(z)-v(z-\eps w)\big)\\
&\times \Big\{\nabla \psi(w)\big[\nabla \A(z,u(z))\cdot w-\nabla \A(z-\eps w,v(z-\eps w))\cdot w\big] \\
&\quad+\psi(w)\big[\Div^a_{z}\A(z,u(z))-\Div^a_{z}\A(z-\eps w,v(z-\eps w))\big]\Big\}dwdz
\Big|.
\end{split}
\]
By Lemma \ref{lemma paraculo} the latter integral converges to 
\[
\begin{split}
&\iint  \vphi(2z)\sign\big(u(z)-v(z)\big)\big\{\nabla \psi(w)\nabla \A(z,u(z))\cdot w+\psi(w)\Div^a_{z}\A(z,u(z))\big\}dwdz
\\
&-\iint  \vphi(2z)\sign\big(u(z)-v(z)\big)\big\{\nabla \psi(w)\nabla \A(z,v(z))\cdot w+\psi(w)\Div^a_{z}\A(z,v(z))\big\}dwdz.
\end{split}
\]
Integrating by parts with respect to the \(w\) variable, we get that both integrals are zero, thus concluding the proof of \eqref{eq:incremento}.
\end{proof}

We conclude the Section by proving Lemma \ref{lemma paraculo} and Lemma  \ref{lemma:luigi}.

\begin{proof}[Proof of Lemma \ref{lemma paraculo}]
Let  $Q\subset\R$ be a countable dense set:  by the continuity of translations in $L^1$  
\[
|f(z+ \tau,u) - f(z,u)| \rightarrow 0 \qquad\textrm{in $L^1_{\rm loc}$ for every  $u\in Q$}. 
\]
If now $u\in L^\infty$, there exists $u_k = \sum_{i=1}^{N_k} u^i_k\one_{A^i_k}$ with 
$u^i_k\in Q$ and such that $\|u-u_k\|_{\infty} \to 0$. Hence for every compact set $K\subset\R^n$
\[
\begin{split}
\int_K|f(z+ \tau,u(z)) - f(z,u(z))|dz &\leq \omega(\|u-u_k\|_{\infty}) \int_K (g(z) + g(z+\tau)) dz \\
& + \int_K|f(z+ \tau,u_k(z)) - f(z,u_k(z))|dz  \\
 &\leq o_k(1) +  \sum_{i=1}^{N_k} \int_{K\cap A^i_k}| f(z + \tau, u^i_k) -  f(z , u^i_k)|dz 
\end{split}
\]
where $o_k(1) \to 0$ independently on $\tau$ as $k\to\infty$. Passing to the limit first on $\tau$ and then on $k$ proves the first claim. 
To prove the second claim note that thanks to what we have proved it is enough to show that 
\[
\sign(u(z+\tau) - v(z)) [f(z, u(z+\tau)) - f(z,v(z))] \rightarrow \sign(u(z) - v(z)) [f(z, u(z)) - f(z,v(z))] 
\]
in $L^1_{\rm loc}$ as ${\tau\to 0}$. Since the map
\[
(u,v) \mapsto \sign(u- v) [f(z, u) - f(z,v)] 
\] 
has  modulus of  continuity $2\omega$  independently on $z$ this plainly follows by the continuity of translations in \(L^1\).
\end{proof}

\begin{proof}[Proof of Lemma \ref{lemma:luigi}]
	Up to dilating and rotating we can assume that $w=e_n$. We will write
	$z=(z', z_n)$ with $z'\in\R^{n-1}$ and $z_n\in\R$.
	
       Let $Q=(v_j)\subset\R$ be a countable dense set in $\R$. By slicing theory for \(BV\) functions, see \cite[Chapter 3]{AFP}, 
	for every $j\in\N$ there exists a set $D_j\subset\R^n$
	with $\L^n(D_j) = 0$, such that,
	for every $z\in\R^n\setminus D_j$, the function
	$t \mapsto \A(z', z_n+t, v_j)$ belongs to $BV(\R)$ and 
	the absolutely continuous part of its derivative,
	denoted by $\frac{\partial\A}{\partial z_n}(z', z_n+t, v_j)$,
	coincides with $\nabla_z \A(z', z_n+t, v_j)\cdot e_n$. Hence for  $j\in\N$ and $z\in \R^n\setminus D_j$
	we define
	\[
	\A^1_\eps(z',z_n, v_j) = \int_0^1 \frac{\partial\A}{\partial z_n}(z', z_n + \eps t, v_j)\, dt= \int_0^1 \nabla_z \A(z', z_n+\eps t, v_j)\cdot e_n\, dt\,.
	\]
	From \cite[Thm.~2.4]{Amb} there exists a measurable set $D\subset\R^n$, with
	$D\supset \mathcal{C}_{\A} \cup \bigcup_j D_j$ and $\L^n(D) = 0$,
	such that
	\begin{equation}\label{f:amb}
	\lim_{\eps\downarrow 0} \A^1_\eps(z,v_j) =
	\nabla_z \A(z, v_j)\cdot e_n,
	\qquad \forall j\in\N\ \text{and}\ z\in\R^n\setminus D.
	\end{equation}
	Moreover, up to add to \(D\) a set of Lebesgue measure zero, we can assume that  every \(z\) in \(\R^n\setminus D\)
	 is a Lebesgue point for the function \(g\) appearing in  (H4) and that 
        \begin{equation}\label{geps}
	G_\eps(z) := \int_0^1 g(z', z_n + \eps t)\, dt\,\to g(z)\qquad \textrm{as $\eps\downarrow 0$.}
	\end{equation}
	Let us now fix $z\in\R^n\setminus D$ and  $j,k\in\N$: by (H4) we have that
	\begin{equation}\label{f:cauchy}
	\begin{split}
	\left|\A^1_\eps(z, v_j) - \A^1_\eps(z,v_k)\right|
	& \leq
	\int_0^1\left|\nabla_z\A(z', z_n+\eps t, v_j) - \nabla_z \A(z', z_n + \eps t, v_k)\right|\, dt
	\\ &
	\leq G_\eps(z) \, \omega(|v_j - v_k|).
	\end{split}
	\end{equation}
       Let us now take \(v\in \R\) and  $v_{j}\in Q$ with \(v_j\to v\). By~\eqref{f:cauchy} 
	$\left(\A^1_\eps(z, v_{j})\right)_j$ is a Cauchy sequence,
	hence it converges to  a unique limit  $\ell_\eps(z,v)$. Let us define for \(v\in \R\) and \(z\in \R^n\setminus D\)
	\[
	\A^1_\eps(z, v)=\ell_\eps(z,v)
	\]
	and 
	\[
	\A^2_\eps(z, v)=\frac{\A(z+\eps w, v) - \A(z,v)}{\eps}-\A^1_{\eps}(z,v).
	\]
	We now verify the validity of (i)-(iii). First of all  \eqref{f:cauchy} implies
	\begin{equation}\label{f:cauchy2}
	\left|\A^1_\eps(z, v) - \A^1_\eps(z,v')\right|
	\leq G_\eps(z) \, \omega(|v - v'|)
	\qquad\forall v,v'\in\R.
	\end{equation}
	Moreover, according to \cite[Lemma 3.4]{ACDD}, we can add to \(D\) a set of measure zero outside which \(\nabla \A(z,v)\) is well defined and continuous in \(v\). Hence for \(z\in \R^n\setminus D\) and \(v\in \R\), by \eqref{f:cauchy2} and (H4), we have 
	\[
	\begin{split}
	\left|\A^1_\eps(z,v) - \nabla_z\A(z,v)\cdot e_n\right|
	\leq {} &
	\left|\A^1_\eps(z,v) - \A^1_\eps(z,v_j)\right|
	+
	\left|\A^1_\eps(z,v_j) - \nabla_z\A(z, v_j)\cdot e_n\right|
	\\ & +
	\left|\nabla_z\A(z,v_j)\cdot e_n - \nabla_z \A(z,v)\cdot e_n\right|
	\\ \leq {} &
	G_\eps(z)\, \omega(|v-v_j|) 
	+
	\left|\A^1_\eps(z,v_j) - \nabla_z\A(z, v_j)\cdot e_n\right|
	\\ & +
	g(z)\, \omega(|v-v_j|)\,.
	\end{split}
	\]
	Taking the limsup as $\eps\downarrow 0$ 
	and taking into account~\eqref{f:amb} and \eqref{geps}
	we get
	\[
	\limsup_{\eps\downarrow 0}
	\left|\A^1_\eps(z,v) - \nabla_z\A(z,v)\cdot e_n\right|
	\leq
	2 g(z) \, \omega(|v-v_j|).
	\]
   Since $(v_j)$ in dense in $\R$, we conclude that~(i) holds.
	
	\medskip
	Let us prove (ii).
	For almost every \(z\in \R^n\) we have 
	\[
	h_\eps(z) = \sup_{j\in\N} \left|\A^1_\eps(z, v_j)\right|
	\leq
	\int_0^1 \sigma^a(z', z_n + \eps t)\, dt.
	\]
	Since $\sigma^a\in L^1(\R^n)$  there exists a superlinear, convex,
	increasing function $\psi\colon [0,+\infty)\to [0, +\infty)$ such that
	\[
	\int_{\R^n}\psi(\sigma^a(z))\, dz < +\infty.
	\]
	Then, by Jensen's inequality,
	\[
	\begin{split}
	\int_{\R^n} \psi(h_\eps(z))\, dz
	& \leq
	\int_{\R^{n-1}} \int_{\R} \psi\left(\int_0^1 \sigma^a(z', z_n+\eps t)\, dt\right)\, dz_n\, dz'
	\\ & \leq
	\int_0^1 \int_{\R} \int_{\R^{n-1}} \psi(\sigma^a(z', z_n + \eps t)) dz'\, dz_n\, dt
	\\ & = 
	\int_{\R^{n}} \psi(\sigma^a(z))\, dz < +\infty\,.
	\end{split}
	\]
	By the Dunford--Pettis compactness criterion we conclude that
	the family $(h_\eps)$ is equi--integrable in $L^1(\R^n)$,
	hence~(ii) is proved.
	
	\medskip

	To  conclude (iii), for every $j\in\N$ we let $\varphi^j_{z'}(z_n) := \A(z',z_n, v_j)$ and we note that for almost every $z_n$
	\[
	\begin{split}
	\frac{\A(z+\eps e_n, v_j) - \A(z,v_j)}{\eps}
	& =\frac{1}{\eps}\, D\varphi^j_{z'} ([z_n,z_n+\eps])
	\\ & = \int_0^1 \nabla_z\A(z+t \eps e_n, v_j) \cdot e_n\, dt
	+\frac{1}{\eps}\, D^s\varphi^j_{z'}([z_n,z_n+\eps]),
	\end{split}
	\]
	hence
	\[
	\left|\A^2_{\eps}(z,v_j)\right| \leq\frac{1}{\eps}\, |D^s\varphi^j_{z'}|([z_n,z_n+\eps]).
	\]
	If $K$ is a compact subset of $\R^n$ we get
	\begin{equation}
	\label{f:stima2j}
	\begin{split}
	\int_K \left|\A^2_{\eps}(z,v_j)\right|\, dz
	& \leq
	\int_{\R^{n-1}} dz' \int_{\{z_n:\ (z',z_n)\in K\}} dz_n
	\frac{1}{\eps}\, |D^s\varphi^j_{z'}|([z_n,z_n+\eps])
	\\ & =
	\int_{\R^{n-1}} dz' \int_{\{z_n:\ (z',z_n)\in K\}} dz_n
	\int_{\R}  \frac{1}{\eps}\,\one_{[z_n,z_n+\eps]}(t) \, d|D^s\varphi^j_{z'}|(t)
	\\ & \leq
	\int_{\R^{n-1}} dz' \int_{\{t:\ (z',t)\in K_\eps\}} 
	d|D^s\varphi^j_{z'}|(t)
	\int_{\R} dz_n \frac{1}{\eps}\,\one_{[z_n,z_n+\eps]}(t) \, 
	\\ & \leq
	|D^s \A(\cdot, v_j)|(K_\eps) \leq
	\sigma^s(K_\eps).
	\end{split}
	\end{equation}
	Let now $v\in\R$.
	From \eqref{f:cauchy2},  \eqref{f:stima2j} and (H1) we get 
	\[
	\begin{split}
	\int_K \left|\A^2_{\eps}(z,v)\right|\, dz
	\leq {} &
	\int_K \left|\A^2_{\eps}(z,v)-\A^2_{\eps}(z,v_j)\right|\, dz
	+ 
	\int_K \left|\A^2_{\eps}(z,v_j)\right|\, dz
	\\ \leq {} &
	\int_K\left|\frac{\A(z+\eps w, v) - \A(z,v)}{\eps}
	-\frac{\A(z+\eps w, v_j) - \A(z,v_j)}{\eps}\right|\, dz
	\\ & + 
	\int_K \left|\A^1_{\eps}(z,v)-\A^1_{\eps}(z,v_j)\right|\, dz
	+
	\int_K \left|\A^2_{\eps}(z,v_j)\right|\, dz
	\\ \leq {} &
	\frac{2}{\eps}\, M\,
	\L^n(K_\eps)\, |v-v_j|
	+\left(\int_{K} G_\eps(z)\, dz\right) \, \omega(|v-v_j|)
	+ \sigma^s(K_\eps).
	\end{split}
	\]
	Exploiting the density of  $(v_j)$, we
	get (iii).
\end{proof}

\section{Proofs of Theorems \ref{thm:gentropy} and \ref{thmW110}}\label{sec:fine}

In this Section we briefly sketch the proofs of Theorems \ref{thm:gentropy} and \ref{thmW110}.

\begin{proof}[Proof of Theorem \ref{thm:gentropy}]
By Theorem \ref{thm:representation} we have that any two \(\boldsymbol{\mathcal G}\)-entropy solutions \(u,v\) satisfy the Generalized Kato Inequality \eqref{Kato2}. By the usual test function argument, see \cite{Kruz} we then obtain that 
 for every \(T>0\) and every \(R>0\)
\begin{equation*}
\begin{split}
& \int_{B_R}\ 
|u(T,x)-v(T,x)| \, dx
\\ & \leq 
\int_{B_{R+VT}}\ 
|u(0,x)-v(0,x)| \, dx
+ \int_{{\mathcal N}\cap ([0,T]\times B_{R+VT})} w(t,x)\,d\H^{N}(t,x) 
\,,
\end{split}
\end{equation*}
where \(w(t,x)\) is given by \eqref{W}. Since \(u,v\) are  \(\boldsymbol{\mathcal G}\)-entropy solutions \(w\le 0\), from which uniqueness immediately follows. 
\end{proof}

\begin{proof}[Proof of Theorm \ref{thmW110}] If \(\F(\cdot, u)\) belongs to \(W^{1,1}\) it easily follows from the definition of supremum of measures that \(\sigma^s=0\) which implies that  \(\mathcal H^N(\mathcal N)=0\). Theorem \ref{thm:representation} then gives that any two entropy solutions satisfy a true Kato inequality:
\[
\partial_t|u-v|+\diver_x \Big(\sign (u-v) [\F(t,x,u)-\F(t,x,v)]\Big)\leq 0,
\]
from which the validity of the \(L^1\) contraction inequality is then straightforward.
\end{proof}

\def\cprime{$'$}
\providecommand{\bysame}{\leavevmode\hbox to3em{\hrulefill}\thinspace}
\providecommand{\MR}{\relax\ifhmode\unskip\space\fi MR }
\providecommand{\MRhref}[2]{%
	\href{http://www.ams.org/mathscinet-getitem?mr=#1}{#2}
}
\providecommand{\href}[2]{#2}

\end{document}